\theoremstyle{plain}
\newtheorem{theorem}{Theorem}[section]
\newtheorem{lemma}[theorem]{Lemma}
\newtheorem{corollary}[theorem]{Corollary}
\newtheorem{proposition}[theorem]{Proposition}
\theoremstyle{definition}
\newtheorem{definition}[theorem]{Definition}
\theoremstyle{remark}
\newtheorem{remark}[theorem]{Remark}
\DeclareMathOperator{\RQ}{RQ}
\DeclareMathOperator{\vol}{vol}
\begin{document}

\title{Spectra of Complex Unit Hypergraphs}

\author[1,2]{Raffaella Mulas\footnote{Email address: r.mulas@vu.nl}}
\author[3]{Nathan Reff}
\affil[1]{Vrije Universiteit Amsterdam, Amsterdam, The Netherlands}
\affil[2]{Max Planck Institute for Mathematics in the Sciences, Leipzig, Germany}
\affil[3]{Relativity, 231 South LaSalle Street, 8th Floor. Chicago, IL 60604, USA}

\date{}
	
\maketitle

\begin{abstract}
A complex unit hypergraph is a hypergraph where each vertex-edge incidence is given a complex unit label.  We define the adjacency, incidence, Kirchoff Laplacian and normalized Laplacian of a complex unit hypergraph and study each of them.  Eigenvalue bounds for the adjacency, Kirchoff Laplacian and normalized Laplacian are also found.
Complex unit hypergraphs naturally generalize several hypergraphic structures such as oriented hypergraphs, where vertex-edge incidences are labelled as either $+1$ or $-1$, as well as ordinary hypergraphs.  Complex unit hypergraphs also generalize their graphic analogues, which are complex unit gain graphs, signed graphs, and ordinary graphs.\\
\vspace{0.2cm}
\noindent {\bf Keywords:} Hypergraphs, Laplace operators, Adjacency matrix, Spectrum
\end{abstract}

	\section{Introduction}
	Spectral theory of graphs studies the eigenvalues of the adjacency matrix, the Kirchhoff Laplacian and the normalized Laplacian associated to a graph \cite{Chung,BH2012}. Such eigenvalues are known to identify many, if not most, important qualitative properties of a given graph, and they can be easily computed with tools from linear algebra. For these reasons, spectral graph theory finds application in many disciplines and it has been widely investigated.
	
	As an extension of this theory, graph operators have been introduced and studied for \emph{hypergraphs}; a generalization of graphs in which edges do not necessarily join only \emph{pairs} of vertices but rather \emph{sets of vertices} of any cardinality. This allows us to model communities of elements of any size, for instance, chemical reactions involving \emph{sets} of chemical elements or research articles whose authors are \emph{groups} of people and not necessarily pairs. Hypergraphs are therefore very interesting objects both from the mathematical point of view and due to their applicability in network science. The study of their spectra is an active field of research. Moreover, further generalizations of the theory include the existence of a given coefficient for each vertex--edge incidence in a hypergraph: \emph{oriented hypergraphs} \cite{ReffRusnak} have coefficients in $\mathbb{Z}\setminus \{0\}$; \emph{chemical hypergraphs} \cite{JM2019} have coefficients in $\{-1,0,+1\}$; \emph{hypergraphs with real coefficients} \cite{JM2020} have coefficients in $\mathbb{R}\setminus \{0\}$. The spectra of the Laplace operators and the adjacency matrix have been studied for oriented hypergraphs, while the spectrum of the normalized Laplacian has been investigated for chemical hypergraphs and hypergraphs with real coefficients.  We refer the reader to \cite{hyp2013,hyp2014,hyp2016,hyp2019,hyp2019-3,Sharp,Independence,Symmetries,Rusnak21} for a significant, if incomplete, selection of literature on this topic. Moreover, we refer to \cite{yu2019,wang2022,Galuppi21} for a selection of literature on the spectral theory of signed hypergraphs and weighted hypergraphs in which higher dimensional tensors are considered. 
	
	Here we introduce a generalization of oriented hypergraphs in which the coefficient of a vertex--edge incidence is an element of the complex unit circle. We call them \emph{complex unit hypergraphs}. We also define their associated adjacency, Kirchhoff Laplacian and normalized Laplacian matrices, as operators that have entries in the complex field.
	
	The paper is structured as follows. In Section \ref{Section Basic definitions}, we give the basic definitions on complex unit hypergraphs and their associated operators. In Section \ref{Section First properties}, we investigate the first properties of the spectra and in Section \ref{Section Hypergraph transformations} we discuss hypergraph transformations and their effect on the eigenvalues. Finally, in Section \ref{Section: smallest and largest}, we provide several bounds for the smallest and largest eigenvalues of each operator.

\section{Basic definitions}\label{Section Basic definitions}

In this section we are going to present the basic definitions that will be needed throughout the paper. We note that different references might use different definitions, and we do not claim that the definitions that are presented here should be the standard ones.

	\begin{definition}
			A \emph{hypergraph} is a triple $(V,E,\mathcal{I})$ such that:
			\begin{itemize}
				\item $V=\{v_1,\ldots,v_n\}$ is a finite set of \emph{nodes} or \emph{vertices};
				\item $E=\{e_1,\ldots,e_m\}$ is a finite set of \emph{edges};
				\item $\mathcal{I}\subseteq V\times E$ is a set of \emph{incidences}.
					\end{itemize}
					If $(v,e)\in \mathcal{I}$, $v$ and $e$ are \emph{incident} and we denote it by $v\in e$. If $v_i\neq v_j$ are both incident to a given edge $e$, then $v_i$ and $v_j$ are \emph{adjacent}, denoted $v_i\sim v_j$, and $e$ \emph{joins} $v_i$ and $v_j$. The \emph{set of oriented adjacencies} is
					\begin{equation*}
					    \vv{\mathcal{A}}:=\{(e,v_i,v_j)\in E\times V\times V: e \text{ joins }v_i\text{ and }v_j \}.
					\end{equation*}
		\end{definition}

\begin{remark}
The above definition of adjacent vertices does not include self-loops. Therefore, it differs from the definition in \cite{ReffRusnak,hyp2018-2}.
\end{remark}
		\begin{definition}The degree of a vertex $v_j$, denoted by $d_j = deg(v_j)$, is equal to the number of incidences containing $v_j$. The size of an edge $e$ is the number of incidences which are contained in
$e$. A $k$-edge is an edge of size $k$. A $k$-uniform hypergraph is a hypergraph
such that all of its edges have size $k$. A $d$-regular hypergraph is a hypergraph
where ever vertex has degree $d$.
		\end{definition}
		We let $\mathbb{T}$ denote the multiplicative group of complex units.
		
	\begin{definition}A \emph{complex unit hypergraph} is a quadruple $G=(V,E,\mathcal{I},\omega)$ consisting of a hypergraph $(V,E,\mathcal{I})$ and an {\it incidence phase function} $\omega:V\times E\rightarrow\mathbb{T}\cup\{0\}$ that satisfies
		\begin{equation*}
				  \omega(v,e)\neq 0 \iff  v\in e.
				\end{equation*}	    
					\end{definition}

\begin{remark}
Here we choose to take $V\times E$ as the domain of the incidence phase function $\omega$, and to let $\omega$ be zero outside of $\mathcal{I}$. Alternatively, as done for instance in \cite{ReffRusnak}, one can choose to take $\mathcal{I}$ as the domain in the definition of  incidence phase function.
\end{remark}

				From here on we fix a complex unit hypergraph $G=(V,E,\mathcal{I},\omega)$. Moreover, we let
		\begin{equation*}
		    \varphi:\mathcal{\vv{\mathcal{A}}}\rightarrow \mathbb{T},
		\end{equation*}called the {\it adjacency gain function}, be defined by
		\begin{equation*}
		   \varphi_e(v_i,v_j):=\varphi(e,v_i,v_j)=- \omega(v_i,e)\cdot \omega(v_j,e)^{-1}.
		\end{equation*}
	\begin{remark}\label{rmkphi}For all $e\in E$ and for all $v_i\neq v_j\in e$,
	\begin{align*}
	    \varphi_e(v_i,v_j)&=-\omega(v_i,e)\cdot \omega(v_j,e)^{-1}\\
	    &=[-\omega(v_j,e)\cdot \omega(v_i,e)^{-1}]^{-1}\\
	    &=\varphi_e(v_j,v_i)^{-1}.
	\end{align*}
	    	\end{remark}
	    	


\noindent {\bf Example:} See Figure \ref{CHExample} for an example of a complex unit hypergraph $G=(V,E,\mathcal{I},\omega)$.  	    	

\begin{figure}[h!]
    \includegraphics[scale=0.75]{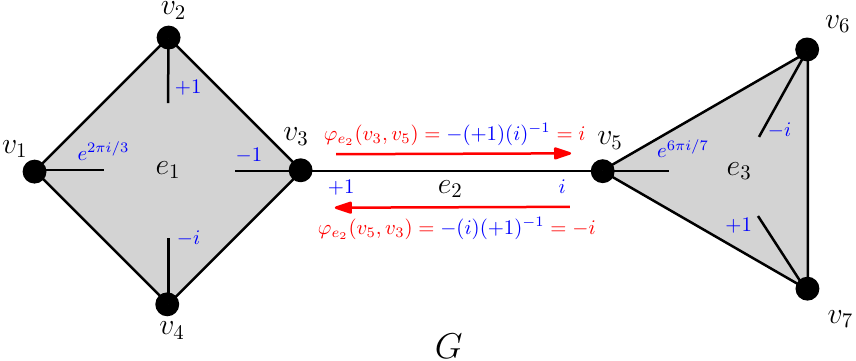}\centering
    \caption{A complex unit hypergraph $G$.  Edge $e_1$ is a 4-edge, $e_2$ is an 2-edge, and edge $e_3$ has 3-edge.  The incidence labels (incidence phase function values) are colored in blue.  Here adjacency gain values for the two oriented adjacencies with $e_2$ are shown and colored in red.  To make this picture much simpler, the other adjacency gain values are left out. }\label{CHExample}
\end{figure}

	    	\begin{remark} A 2-uniform complex unit hypergraph is a \emph{$\mathbb{T}$-oriented gain graph} \cite{MR3530682}.  If one ignores the incidence phase function, but preserves the adjacency gain function values this is a \emph{complex unit gain graph} (or \emph{$\mathbb{T}$-gain graph}).  Complex unit hypergraphs are the natural hypergraph analogue of these types of gain graphs, of which this paper generalizes much of their spectral properties \cite{MR2900705}.
	    	\end{remark}
	    	
	    	\begin{remark}An \emph{oriented hypergraph} \cite{ReffRusnak} is a complex unit hypergraph such that $\omega:V\times E\rightarrow\{-1,0,+1\}$.  A 2-uniform oriented hypergraph is an {\it oriented signed graph} \cite{MR1120422}, which also generalizes bidirected graphs \cite{MR0267898}.  If one ignores the incidence labels, but preserves the adjacency signs, this is a \emph{signed graph}.  A \emph{signed simple graph} is an oriented hypergraph such that:
		   \begin{itemize}
		       \item[-] $E$ is a set (that is, $j\neq k$ implies $e_j\neq e_k$);
		       \item[-] Each edge contains exactly two vertices.
		   \end{itemize}A \emph{simple graph} is a signed graph such that, for each edge $e$, there exists a unique $v\in e$ such that $\omega(v,e)=1$ and there exists a unique $w\in e$ such that $\omega(w,e)=-1$.
	    	\end{remark}
	    	
We now define the operators to $G$. Given a complex matrix $M$, we denote by $M^+$ its conjugate transpose.

	    	\begin{definition}The \emph{degree matrix} of complex unit hypergraph $G$ is
\begin{equation*}
    D:=D(G)=\textrm{diag}(\deg v_1,\ldots,\deg v_n).
\end{equation*}

The \emph{incidence matrix} of $G$ is $B:=B(G)=(B_{ij})\in (\mathbb{T}\cup\{0\})^{n\times m}$, where
 \begin{equation*}
        B_{ij}:=\begin{cases}\omega(v_i,e_j) &\text{ if }v_i\in e_j,\\ 0 &\text{otherwise}.\end{cases}
    \end{equation*}
The \emph{adjacency matrix} of $G$ is $A:=A(G)=(a_{ij})\in \mathbb{C}^{n\times n}$, where
    \begin{equation*}
        a_{ij}:=\begin{cases}\sum_{e\in E}\varphi_e(v_i,v_j) &\text{ if }v_i\sim v_j,\\ 0 &\text{otherwise}.\end{cases}
    \end{equation*}
The \emph{Kirchhoff Laplacian} of $G$ is the $n\times n$ matrix
\begin{equation*}
    K:=K(G)=D-A;
\end{equation*}
The \emph{dual Kirchhoff Laplacian} of $G$ is the $m\times m$ matrix
\begin{equation*}
    K^*:=K^*(G)=B^+B;
\end{equation*}
The \emph{normalized Laplacian} of $G$ is the $n\times n$ matrix
\begin{equation*}
    L:=L(G)=D^{-1}K=\textrm{Id}-D^{-1}A;
\end{equation*}
The \emph{dual normalized Laplacian} of $G$ is the $m\times m$ matrix
\begin{equation*}
    L^*:=L^*(G)=B^+D^{-1}B.
\end{equation*}
    \end{definition}
    \begin{definition}
Two hypergraphs $G_1$ and $G_2$ are \emph{cospectral with respect to} a given operator $M$ if $M(G_1)$ and $M(G_2)$ have the same spectrum.
    \end{definition}
\section{First properties}\label{Section First properties}
\begin{remark}\label{rmk:Chung}
    If $v_i$ and $v_j$ are adjacent, then, by Remark \ref{rmkphi},
\begin{equation*}
    a_{ij}=\sum_{e\in E}\varphi_e(v_i,v_j)=\sum_{e\in E}\varphi_e(v_j,v_i)^{-1}=\sum_{e\in E}\overline{\varphi_e(v_j,v_i)}=\overline{a_{ji}}.
\end{equation*}Therefore, $A$ and $K$ are Hermitian matrices and, in particular, they have real eigenvalues. Moreover, the normalized Laplacian $L$ is similar to the Hermitian matrix
    \begin{equation*}
       \mathcal{L}:=\mathcal{L}(G)= D^{1/2}LD^{-1/2}=\textrm{Id}-D^{-1/2}AD^{-1/2},
    \end{equation*}hence $L$ and $\mathcal{L}$ share the same (real) eigenvalues. Also, $\mathbf{x}$ is an eigenvector for $\mathcal{L}$ with eigenvalue $\lambda$ if and only if $D^{-1/2}\mathbf{x}$ is an eigenvector for $L$ with eigenvalue $\lambda$.
\end{remark}
\begin{remark}
    If $G$ is $d$-regular,
\begin{align*}
    \lambda \text{ is an eigenvalue for }K &\iff \frac{\lambda}{d} \text{ is an eigenvalue for }L\\
    &\iff d-\lambda \text{ is an eigenvalue for }A.
\end{align*}Therefore, for regular complex unit hypergraphs, the spectra of $K$, $L$ and $A$ are all equivalent, up to an additive or multiplicative constant.
\end{remark}
    \begin{theorem}\label{thm:Laplacians}
    The Kirchhoff Laplacian and the normalized Laplacian can be rewritten as
    \begin{equation*}
        K=BB^+ \quad \text{and}\quad L=D^{-1}BB^+,
    \end{equation*}
    respectively.
    \end{theorem}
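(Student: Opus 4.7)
The plan is to prove the identity $K=BB^+$ by a direct entrywise computation, and then deduce the formula for $L$ immediately from its definition $L=D^{-1}K$.

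First I would fix indices $i,j$ and write
\begin{equation*}
(BB^+)_{ij}=\sum_{e\in E} B_{ie}\overline{B_{je}}=\sum_{e\in E}\omega(v_i,e)\,\overline{\omega(v_j,e)},
\end{equation*}
noting that only edges $e$ containing both $v_i$ and $v_j$ contribute, since $\omega(v,e)=0$ whenever $v\notin e$. The key observation, used throughout the rest of the argument, is that the incidence phase values lie in $\mathbb{T}$, so whenever they are nonzero they satisfy $\overline{\omega(v,e)}=\omega(v,e)^{-1}$.

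Next I would split into the diagonal and off-diagonal cases. For $i=j$, every nonzero term in the sum equals $|\omega(v_i,e)|^{2}=1$, and the number of surviving terms is exactly the number of incidences containing $v_i$, i.e. $\deg v_i$. Hence $(BB^+)_{ii}=d_i$, matching the diagonal of $D-A$. For $i\neq j$ with $v_i\sim v_j$, the surviving terms give
\begin{equation*}
(BB^+)_{ij}=\sum_{e:\,v_i,v_j\in e}\omega(v_i,e)\,\omega(v_j,e)^{-1}=-\sum_{e\in E}\varphi_e(v_i,v_j)=-a_{ij},
\end{equation*}
using the definition $\varphi_e(v_i,v_j)=-\omega(v_i,e)\omega(v_j,e)^{-1}$; here the minus sign in $\varphi$ is precisely what produces the $-A$ in $K=D-A$. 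If $v_i\not\sim v_j$, both sides are zero. Combining the cases yields $BB^+=D-A=K$.

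Finally, multiplying on the left by $D^{-1}$ gives $L=D^{-1}K=D^{-1}BB^+$, which is the second claimed identity. There is no real obstacle here; the only subtlety is remembering to use $\overline{z}=z^{-1}$ for $z\in\mathbb{T}$ so that the complex conjugate in $B^+$ converts into the inverse appearing in the definition of $\varphi_e$, and to track the sign convention in $\varphi$ that makes $BB^+$ produce $-A$ off the diagonal rather than $+A$.
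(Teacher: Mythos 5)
Your proposal is correct and follows essentially the same route as the paper's proof: an entrywise computation of $(BB^+)_{ij}$, splitting into the diagonal case (which yields $\deg v_i$ since $|\omega(v_i,e)|=1$) and the off-diagonal case (which yields $-a_{ij}$ via $\overline{\omega(v_j,e)}=\omega(v_j,e)^{-1}$ and the sign convention in $\varphi_e$), followed by left-multiplication by $D^{-1}$ for $L$. No gaps.
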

    \begin{proof}Observe that
    \begin{equation*}
        (BB^+)_{ij}=\sum_{k=1}^m \omega(v_i,e_k)\overline{\omega(v_j,e_k)}.
    \end{equation*}If $i=j$, then the sum simplifies to 
  \begin{equation*}
      \sum_{k=1}^m |\omega(v_i,e_k)|^2=\deg v_i,
  \end{equation*}since $|\omega(v_i,e_k)|=1$ if $v_i\in e_k$. If $i\neq j$, then the sum simplifies to 
  \begin{equation*}
      \sum_{k=1}^m \omega(v_i,e_k)\overline{\omega(v_j,e_k)}=\sum_{k=1}^m-\varphi_{e_k}(v_i,v_j)=-a_{ij}.
  \end{equation*}Therefore, $K=D-A=BB^+$ and $L=D^{-1}K=D^{-1}BB^+$.
    \end{proof}
    \begin{corollary}\label{cor:E}
    $K$ and $K^*$ have the same non-zero eigenvalues.    Similarly, $L$ and $L^*$ have the same non-zero eigenvalues.
    \end{corollary}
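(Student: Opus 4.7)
The plan is to reduce both statements to the standard linear algebra fact that for any rectangular complex matrix $M$, the matrices $MM^+$ and $M^+M$ have the same non-zero eigenvalues (with the same multiplicities). I would briefly recall the proof of this fact: if $MM^+\mathbf{x} = \lambda \mathbf{x}$ with $\lambda\neq 0$, then $M^+\mathbf{x}\neq 0$ (otherwise $\lambda\mathbf{x}=0$), and
\[
    M^+M(M^+\mathbf{x}) = M^+(MM^+\mathbf{x}) = \lambda\, M^+\mathbf{x},
\]
so $M^+\mathbf{x}$ is an eigenvector of $M^+M$ with the same eigenvalue $\lambda$. The symmetric argument gives the reverse inclusion, and a dimension count on the corresponding eigenspaces (or the analogous map $\mathbf{y}\mapsto M\mathbf{y}$) shows the multiplicities agree.

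For the first statement, Theorem \ref{thm:Laplacians} gives $K = BB^+$ and by definition $K^* = B^+B$, so the claim is immediate from the fact above applied to $M = B$.

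For the second statement, the idea is to conjugate $L$ into a Hermitian form that fits the same mould. Using Remark \ref{rmk:Chung}, $L$ shares its spectrum with
\[
    \mathcal{L} = D^{-1/2}BB^+D^{-1/2} = \tilde{B}\tilde{B}^+, \qquad \text{where } \tilde{B} := D^{-1/2}B.
\]
On the other hand,
\[
    L^* = B^+D^{-1}B = (D^{-1/2}B)^+(D^{-1/2}B) = \tilde{B}^+\tilde{B}.
\]
Applying the fact to $M = \tilde{B}$ gives that $\mathcal{L}$ and $L^*$ have the same non-zero eigenvalues, and hence so do $L$ and $L^*$.

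There is no serious obstacle here — the only subtle point is that $L$ itself is not symmetric/Hermitian, so one cannot directly compare it to $L^*$ through a $BB^+$/$B^+B$ decomposition without first passing through the similarity $L \sim \mathcal{L}$ from Remark \ref{rmk:Chung}. Once that substitution $\tilde{B} = D^{-1/2}B$ is made, both assertions are corollaries of the same standard observation.
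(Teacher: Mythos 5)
Your proposal is correct, and it rests on the same key fact as the paper's proof: that $fg$ and $gf$ have the same non-zero eigenvalues. For $K$ versus $K^*$ your argument is identical to the paper's. For $L$ versus $L^*$ you diverge slightly: you first pass to the Hermitian matrix $\mathcal{L}=\tilde{B}\tilde{B}^+$ with $\tilde{B}=D^{-1/2}B$ via Remark \ref{rmk:Chung}, and then compare with $L^*=\tilde{B}^+\tilde{B}$. The paper instead applies the $fg$/$gf$ fact directly with $f=D^{-1}B$ and $g=B^+$, since $L=(D^{-1}B)B^+$ and $L^*=B^+(D^{-1}B)$; the fact holds for arbitrary linear operators and does not require either product to be Hermitian. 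So your closing remark that one \emph{cannot} compare $L$ and $L^*$ without first passing through the similarity $L\sim\mathcal{L}$ is overstated --- the detour is unnecessary, though harmless. Your route does buy something: by keeping everything in the form $MM^+$ versus $M^+M$, the proof sketch you recalled (the map $\mathbf{x}\mapsto M^+\mathbf{x}$ between eigenspaces) applies verbatim in both cases, and positive semidefiniteness of all four matrices is visible at a glance, which the paper only extracts later in Proposition \ref{prop:nonnegative}.
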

    \begin{proof}It follows from the fact that, if $f$ and $g$ are linear operators, then the non-zero eigenvalues of $fg$ and $gf$ are the same.
    \end{proof}
\begin{corollary}\label{cor:dual}
Given a matrix $M$, let $\mu_0(M)$ denote the multiplicity of the eigenvalue $0$ for $M$. We have that
\begin{equation}\label{eq:n-m}
    \mu_0(K)-\mu_0(K^*)=\mu_0(L)-\mu_0(L^*)=n-m,
\end{equation}
\begin{equation}\label{eq:mu01}
    \mu_0(K)=\mu_0(L) \quad\text{and}\quad  \mu_0(K^*)=\mu_0(L^*).
\end{equation}
\end{corollary}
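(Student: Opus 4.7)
The plan is to handle the two assertions separately. For equation \eqref{eq:n-m}, I would strengthen Corollary \ref{cor:E} slightly to the well-known statement that for any operators $P$ and $Q$ of sizes $n\times m$ and $m\times n$, the matrices $PQ$ and $QP$ share the same non-zero eigenvalues \emph{with algebraic multiplicities}; equivalently, $\lambda^m\det(\lambda I_n-PQ)=\lambda^n\det(\lambda I_m-QP)$. Applied with $P=B$, $Q=B^+$ this gives $\mu_0(K)-\mu_0(K^*)=n-m$, and applied with $P=D^{-1}B$, $Q=B^+$ it gives $\mu_0(L)-\mu_0(L^*)=n-m$. So equation \eqref{eq:n-m} follows at once.

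For equation \eqref{eq:mu01}, the strategy is to identify each $\mu_0$ with the dimension of the corresponding kernel and then match those kernels. This requires a brief bookkeeping step: $K$, $K^*$ and $L^*$ are Hermitian (they are Gram matrices of the form $X^+X$), and $L$ is similar to the Hermitian matrix $\mathcal{L}$ by Remark \ref{rmk:Chung}. In all four cases the algebraic and geometric multiplicities of $0$ agree, so $\mu_0(M)=\dim\ker M$.

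To show $\ker K=\ker L$, I would observe that $L=D^{-1}K$ with $D$ invertible (the definition of $L$ already presupposes no isolated vertices), so $Kx=0\Longleftrightarrow D^{-1}Kx=0\Longleftrightarrow Lx=0$. To show $\ker K^*=\ker L^*$, I would use that $K^*=B^+B$ and $L^*=(D^{-1/2}B)^+(D^{-1/2}B)$ are both Gram matrices, so their kernels coincide with $\ker B$ and $\ker(D^{-1/2}B)$ respectively; since $D^{-1/2}$ is invertible, both equal $\ker B$.

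There is no real obstacle here; the only care needed is the algebraic-versus-geometric multiplicity point, which is clean because everything in sight is either Hermitian or similar to a Hermitian matrix, and the fact that invoking Corollary \ref{cor:E} requires the multiplicity-preserving version of the $PQ$/$QP$ eigenvalue identity rather than the bare set-equality.
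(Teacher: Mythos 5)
Your proof is correct and follows essentially the same route as the paper, which simply cites Corollary \ref{cor:E} together with the factorizations $K=BB^+$ and $L=D^{-1}BB^+$. You are in fact more careful than the paper: you make explicit the multiplicity-preserving form of the $PQ$/$QP$ identity needed for \eqref{eq:n-m} and the Hermitian/similarity argument equating algebraic multiplicity of $0$ with kernel dimension, both of which the paper's two-line proof leaves implicit.
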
 
\begin{proof}
\eqref{eq:n-m} is an immediate consequence of Corollary \ref{cor:E}. \eqref{eq:mu01} follows from the fact that $K=BB^+$ while $L=D^{-1}BB^+$, and from Corollary \ref{cor:E}. 
\end{proof}
    \begin{proposition}\label{prop:nonnegative}The eigenvalues of $K$ and $L$ are non-negative.
    \end{proposition}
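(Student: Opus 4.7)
The plan is to exploit the Gram-type factorizations that are already available: by Theorem \ref{thm:Laplacians} we have $K=BB^+$, and the Hermitian representative $\mathcal{L}$ of $L$ from Remark \ref{rmk:Chung} can be rewritten analogously.

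For $K$, the argument is immediate. Given any $\mathbf{x}\in\mathbb{C}^n$, I would compute
\begin{equation*}
\mathbf{x}^+ K \mathbf{x} = \mathbf{x}^+ B B^+ \mathbf{x} = (B^+\mathbf{x})^+(B^+\mathbf{x}) = \|B^+\mathbf{x}\|^2 \geq 0,
\end{equation*}
so $K$ is positive semi-definite. Since we already know from Remark \ref{rmk:Chung} that $K$ is Hermitian (hence diagonalizable with real eigenvalues), this forces every eigenvalue of $K$ to be non-negative.

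For $L$, I would pass to its Hermitian similar matrix $\mathcal{L}=D^{-1/2}AD^{-1/2}$ introduced in Remark \ref{rmk:Chung}, which has the same eigenvalues as $L$. Substituting $K=BB^+$ into $\mathcal{L}=D^{1/2}LD^{-1/2}=D^{-1/2}KD^{-1/2}$ gives the factorization
\begin{equation*}
\mathcal{L} = D^{-1/2} B B^+ D^{-1/2} = (D^{-1/2}B)(D^{-1/2}B)^+,
\end{equation*}
which is again a Gram matrix and therefore positive semi-definite by the same one-line computation as above. Consequently the eigenvalues of $\mathcal{L}$, and hence of $L$, are non-negative.

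There is no real obstacle here: the content of the statement has essentially been set up by Theorem \ref{thm:Laplacians} and Remark \ref{rmk:Chung}. The only small care point is that $L$ itself is not Hermitian, so the positive semi-definite conclusion must be routed through $\mathcal{L}$ rather than asserted directly for $L$; once this is acknowledged, the proof reduces to observing that both $K$ and $\mathcal{L}$ are of the form $MM^+$.
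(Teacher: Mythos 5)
Your proof is correct. For $K$ it coincides exactly with the paper's argument: $K=BB^+$ is a Gram matrix, hence positive semi-definite. For $L$ you take a slightly different route: the paper transfers non-negativity from the Gram matrix $L^*=(D^{-1/2}B)^+(D^{-1/2}B)$ to $L$ using Corollary \ref{cor:E} (the fact that $fg$ and $gf$ share their non-zero eigenvalues), whereas you transfer it from the Gram matrix $\mathcal{L}=(D^{-1/2}B)(D^{-1/2}B)^+$ to $L$ using the similarity $\mathcal{L}=D^{1/2}LD^{-1/2}$ from Remark \ref{rmk:Chung}. Both are legitimate; your version is arguably marginally cleaner because similarity preserves the entire spectrum, so you never need the (harmless but slightly awkward) remark that the zero eigenvalue is trivially non-negative, which is implicit in the paper's ``same non-zero eigenvalues'' step. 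One small slip: you write $\mathcal{L}=D^{-1/2}AD^{-1/2}$ when introducing it, whereas Remark \ref{rmk:Chung} defines $\mathcal{L}=\textrm{Id}-D^{-1/2}AD^{-1/2}$; your subsequent identity $\mathcal{L}=D^{-1/2}KD^{-1/2}=(D^{-1/2}B)(D^{-1/2}B)^+$ is the correct one and is what the argument actually uses, so this is only a typo.
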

    \begin{proof}
It follows from Theorem \ref{thm:Laplacians} and Corollary \ref{cor:E}, since $K=BB^+$ and $L$ has the same non-zero eigenvalues as
\begin{equation*}
    L^*=B^+D^{-1}B=B^+D^{-1/2}D^{-1/2}B=(D^{-1/2}B)^+D^{-1/2}B. \qedhere
\end{equation*}
    \end{proof}
    
    \begin{proposition}
        \begin{equation*}
        \ker(K)=\ker(L)=\ker(B^+) \quad \text{and}\quad \ker(K^+)=\ker(L^+)=\ker(B).
    \end{equation*}
        \end{proposition}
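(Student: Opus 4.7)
The plan is to read off every one of the four kernel identities directly from the factorizations of the four operators. Theorem~\ref{thm:Laplacians} gives $K=BB^{+}$ and $L=D^{-1}BB^{+}$, and the definitions give $K^{*}=B^{+}B$ and $L^{*}=B^{+}D^{-1}B$. In each case one inclusion is obvious from the factored form, and the reverse inclusion follows from the standard observation that a matrix of the form $M^{+}M$ has the same kernel as $M$ (by pairing with the ambient inner product and reading off a squared norm).

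First I would dispatch $\ker(K)=\ker(B^{+})$. The inclusion $\ker(B^{+})\subseteq\ker(BB^{+})$ is immediate. Conversely, if $Kx=0$, then
\[
0=\langle BB^{+}x,x\rangle=\langle B^{+}x,B^{+}x\rangle=\|B^{+}x\|^{2},
\]
forcing $B^{+}x=0$. For $\ker(L)=\ker(B^{+})$, I would use invertibility of $D$ (every vertex is assumed to have positive degree, otherwise the definition of $L$ is vacuous at that coordinate) to conclude $D^{-1}BB^{+}x=0\iff BB^{+}x=0\iff B^{+}x=0$.

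For the dual side (reading the statement with $K^{*}$ and $L^{*}$, since $M^{+}$ denotes conjugate transpose and $K$ is already Hermitian), the argument is symmetric. From $K^{*}=B^{+}B$ we get $K^{*}x=0\Rightarrow\|Bx\|^{2}=\langle B^{+}Bx,x\rangle=0\Rightarrow Bx=0$, and the converse is immediate. For $L^{*}$, I would rewrite
\[
L^{*}=B^{+}D^{-1}B=(D^{-1/2}B)^{+}(D^{-1/2}B),
\]
so the same squared-norm trick gives $\ker(L^{*})=\ker(D^{-1/2}B)=\ker(B)$, where the last equality again uses invertibility of $D^{-1/2}$.

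There is no real obstacle here; the only subtlety worth flagging is the standing assumption that $D$ is invertible, which is what lets the normalized Laplacians be defined in the first place and is what lets one cancel $D^{-1}$ and $D^{-1/2}$ freely when passing between kernels. Everything else is a one-line computation per identity.
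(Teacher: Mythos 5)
Your proof is correct and takes essentially the same route as the paper: both arguments reduce each kernel identity to the squared-norm observation $\mathbf{x}^{+}M^{+}M\mathbf{x}=\|M\mathbf{x}\|^{2}$ applied to $B^{+}$, $B$ and $D^{-1/2}B$, with the $D^{-1}$ factor removed by invertibility of the degree matrix. Your decision to read $K^{+}$ and $L^{+}$ as the dual operators $K^{*}$ and $L^{*}$ is also the intended one, since the paper simply declares the second claim ``analogous'' to the first, which only makes sense for the duals.
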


    \begin{proof}
  The equality $\ker(K)=\ker(L)$ follows from the fact that $K=BB^+$ while $L=D^{-1}BB^+$. Now, given $\mathbf{x}\in\mathbb{C}^n$,
  \begin{equation*}
      \mathbf{x}\in \ker(K) \iff \mathbf{x}^+K\mathbf{x}= \mathbf{x}^+BB^+\mathbf{x}=(B^+\mathbf{x})^+B^+\mathbf{x}=0\iff \mathbf{x}\in \ker(B^+).
  \end{equation*}This proves the first claim. The second one is analogous.
    \end{proof}
    
Given an $n\times n$ matrix $M$ with real eigenvalues, we will denote its spectrum by
\begin{equation*}
    \lambda_1(M)\leq \ldots\leq \lambda_n(M).
\end{equation*}
\begin{remark}\label{rmk:trace}
    Since the trace of a matrix equals the sum of its eigenvalues,
  \begin{itemize}
      \item $\sum_{i=1}^n\lambda_i(A)=0$;
      \item $\sum_{i=1}^n\lambda_i(K)=\sum_{j=1}^m\lambda_j(K^*)=\sum_{v\in V}\deg v=\sum_{e\in E}|e|$;
      \item $\sum_{i=1}^n\lambda_i(L)=\sum_{j=1}^m\lambda_j(L^\top)=n$.
  \end{itemize}
\end{remark}

\section{Hypergraph transformations}\label{Section Hypergraph transformations}
In this section we discuss some hypergraph transformations and their effect on the spectra.

\subsection{Duality}
\begin{definition}Given $G=(V,E,\mathcal{I},\omega)$, its \emph{dual hypergraph} is $G^*:=(E,V,\mathcal{I}^*,\omega^*)$, where 
\begin{equation*}
    \mathcal{I}^*:=\{(e,v):(v,e)\in\mathcal{I}\},
\end{equation*}and
$\omega^*: E\times V\rightarrow\mathbb{T}\cup\{0\}$ is defined by
\begin{equation*}
    \omega^*(e,v):=\omega(v,e)^{-1}.
\end{equation*}\end{definition}

\begin{remark}\label{rmk:dual}
Clearly,
\begin{itemize}
    \item The degree of a vertex in $G$ equals the size of the corresponding edge in $G^*$ and the size of an edge in $G$ equals the degree of the corresponding vertex in $G^*$;
    \item $(G^*)^*=G$;
    \item $B(G^*)=B(G)^+$;
    \item $K(G^*)=B^+B=K^*(G)$. 
\end{itemize}
In particular, in view of Corollary \ref{cor:dual}, $G$ and its dual hypergraph have the same non-zero eigenvalues with respect to the Kirchhoff Laplacian $K$. The same doesn't hold, in general, for the normalized Laplacian.
\end{remark}
 \begin{proposition}If $G$ is $d$-regular and $m$-uniform, then
 \begin{equation*}
     L(G^*)=\frac{d}{m}\cdot L^*(G)\quad \text{and}\quad L^*(G^*)=\frac{d}{m}\cdot L(G).
 \end{equation*}
    \end{proposition}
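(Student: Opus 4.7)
The plan is to expand both sides using the formulas already established: $L=D^{-1}BB^{+}$ from Theorem~\ref{thm:Laplacians}, the definition $L^{*}=B^{+}D^{-1}B$, and the duality dictionary in Remark~\ref{rmk:dual}, most importantly $B(G^{*})=B(G)^{+}$ together with the fact that the degree of a vertex in $G^{*}$ equals the size of the corresponding edge in $G$.

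The regularity and uniformity hypotheses collapse both degree matrices to scalar multiples of the identity: $D(G)=d\cdot\mathrm{Id}_{n}$ since every vertex has degree $d$, and $D(G^{*})=m\cdot\mathrm{Id}$ on the edge-indexed space since every edge of $G$ has size $m$ (so every vertex of $G^{*}$ has degree $m$). Once these replacements are made, both identities reduce to trivial algebraic checks.

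Concretely, for the first identity I would compute
\begin{equation*}
L(G^{*})=D(G^{*})^{-1}B(G^{*})B(G^{*})^{+}=\tfrac{1}{m}\,B(G)^{+}B(G),
\end{equation*}
and separately
\begin{equation*}
\tfrac{d}{m}L^{*}(G)=\tfrac{d}{m}\,B^{+}D(G)^{-1}B=\tfrac{d}{m}\cdot\tfrac{1}{d}\,B^{+}B=\tfrac{1}{m}\,B^{+}B,
\end{equation*}
so the two expressions coincide. The second identity is handled symmetrically: $L^{*}(G^{*})=B(G^{*})^{+}D(G^{*})^{-1}B(G^{*})=\tfrac{1}{m}BB^{+}$, while $\tfrac{d}{m}L(G)=\tfrac{d}{m}D^{-1}BB^{+}=\tfrac{1}{m}BB^{+}$.

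There is no real obstacle here; the only point requiring a moment of care is the notational clash in which \emph{m} in the statement denotes the uniformity parameter rather than the cardinality of $E$, and the bookkeeping of distinguishing $D(G)$ from $D(G^{*})$, which live on different index sets and simplify to different scalars.
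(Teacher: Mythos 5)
Your proof is correct and follows essentially the same route as the paper's: both arguments use $D(G)=d\cdot\mathrm{Id}$, $D(G^*)=m\cdot\mathrm{Id}$, and $B(G^*)=B(G)^+$ to reduce each side to $\tfrac{1}{m}B^+B$ (resp. $\tfrac{1}{m}BB^+$). Nothing further is needed.
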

\begin{proof}
Since $G$ is $d$-regular and $m$-uniform, $G^*$ is $m$-regular and $d$-uniform. Hence,
\begin{equation*}
    L(G)=\frac{1}{d}\cdot \mathrm{Id}\cdot BB^+ \quad \text{and}\quad 
    L^*(G)=B^+\cdot \frac{1}{d}\cdot \textrm{Id}\cdot B.
\end{equation*}while
\begin{equation*}
     L(G^*)=\frac{1}{m}\cdot \mathrm{Id}\cdot B^+B=\frac{d}{m}\cdot L^*(G)
\end{equation*}and similarly $L^*(G^*)=\frac{d}{m}\cdot L(G)$.
\end{proof}
\subsection{Weak vertex deletion and weak edge deletion}

We shall now consider weak vertex deletion and weak edge deletion \cite{hyp2013}.
\begin{definition}Let $G=(V,E,\mathcal{I},\omega)$.\newline
Given $v\in V$, we let $G\setminus v:=(V\setminus \{v\},E,\mathcal{I}_v,\omega_v)$, where:
\begin{itemize}
    \item $\mathcal{I}_v:=\mathcal{I}\cap \bigl((V\setminus\{v\})\times E\bigr)$, and
    \item $\omega_v:=\omega|_{(V\setminus\{v\})\times E}$ is $\omega$ restricted to $(V\setminus\{v\})\times E$.
\end{itemize}We say that $G\setminus v$ is obtained from $G$ by a \emph{weak vertex deletion} of $v$.
\end{definition}
We will apply the Cauchy Interlacing Theorem \cite[Theorem 4.3.17]{MatrixAnalysis} in order to prove the results in this subsection.

\begin{theorem}[Cauchy Interlacing Theorem]\label{thm:Cauchy}
Let $M\in \mathbb{C}^{n\times n}$ be Hermitian, let $r\in \{1,\ldots,n-1\}$ and let $M_r$ be an $r\times r$ principal submatrix of $M$. Then 
\begin{equation*}
        \lambda_{k}(M)\leq \lambda_k(M_r)\leq \lambda_{k+r}(M)\qquad\text{for all }k\in\{1,\ldots,n-r\}.
    \end{equation*}
\end{theorem}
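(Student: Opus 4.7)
The plan is to derive both interlacing inequalities from the Courant--Fischer min-max characterization of the eigenvalues of a Hermitian matrix, combined with the structural observation that a principal submatrix is exactly the restriction of the associated Hermitian form to a coordinate subspace. For any Hermitian $N \in \mathbb{C}^{n \times n}$,
\begin{equation*}
\lambda_k(N) = \min_{\substack{S \subseteq \mathbb{C}^n \\ \dim S = k}} \max_{0 \neq x \in S} \frac{x^+ N x}{x^+ x} = \max_{\substack{T \subseteq \mathbb{C}^n \\ \dim T = n - k + 1}} \min_{0 \neq x \in T} \frac{x^+ N x}{x^+ x}.
\end{equation*}
If $J \subseteq \{1, \dots, n\}$ is the index set of size $r$ used to cut out $M_r$, let $V_J \subseteq \mathbb{C}^n$ be the span of the standard basis vectors indexed by $J$; then $V_J$ is linearly isometric to $\mathbb{C}^r$ in a way that identifies the Hermitian form $x \mapsto x^+ M x$ on $V_J$ with $y \mapsto y^+ M_r y$ on $\mathbb{C}^r$. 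This identification is the only structural ingredient needed.

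For the lower bound $\lambda_k(M) \leq \lambda_k(M_r)$, I would apply the min-max expression to $M$ and restrict the outer minimum from all $k$-dimensional subspaces of $\mathbb{C}^n$ to just those contained in $V_J$. Under the identification above, this restricted minimum is exactly $\lambda_k(M_r)$, and shrinking the feasible set of a minimum can only raise its value. For the upper bound I would dually use the max-min form: both $\lambda_k(M_r)$ (as a max-min inside $\mathbb{C}^r$) and the matching eigenvalue of $M$ are witnessed by subspaces of dimension $r - k + 1$, and matching this dimension count is what pins down the correct index on the $M$-side. Embedding any optimal such subspace from $\mathbb{C}^r$ into $V_J \subseteq \mathbb{C}^n$ furnishes a candidate for the outer maximum on the $M$-side, and enlarging the feasible set of a maximum can only raise its value, giving the bound.

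The only real point requiring care is bookkeeping: matching the dimension counts in the dual form against the theorem's indexing convention (the shift $k + r$ versus $k + n - r$ simply reflects whether $r$ counts the coordinates kept or the coordinates deleted), and checking that the identification $V_J \cong \mathbb{C}^r$ truly preserves the Hermitian form --- which it does by construction, since $M_r$ is assembled from exactly the rows and columns indexed by $J$. No ingredients beyond Courant--Fischer and an elementary dimension count are needed.
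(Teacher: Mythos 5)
The paper does not prove this statement at all: it is quoted from Horn and Johnson's \emph{Matrix Analysis} (Theorem 4.3.17) purely as a tool, so there is no internal proof to compare against. Your Courant--Fischer argument is the standard proof of Cauchy interlacing and it is correct: the Hermitian form of $M$ restricted to the coordinate subspace $V_J$ is exactly the form of $M_r$, the lower bound follows by shrinking the feasible set in the min--max expression, and the upper bound by enlarging it in the max--min expression. One point you raise only in passing deserves to be made explicit rather than waved at as ``bookkeeping'': run with $M_r$ of size $r\times r$ as in the statement, your argument yields $\lambda_k(M)\leq\lambda_k(M_r)\leq\lambda_{k+n-r}(M)$ for $k\in\{1,\ldots,r\}$, whereas the statement as printed asserts $\lambda_{k+r}(M)$ for $k\in\{1,\ldots,n-r\}$. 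The two agree only if one reads $r$ as the number of \emph{deleted} rows and columns, so that the submatrix has size $(n-r)\times(n-r)$; that is indeed the convention under which the theorem is applied later in the paper (deleting $r$ vertices or $r$ edges), so the printed statement carries an inconsistency in the declared size of $M_r$ rather than in the inequality. Your diagnosis of the discrepancy is right, but a complete write-up should fix the convention explicitly and state for which $k$ each inequality is being claimed, instead of leaving the reconciliation implicit.
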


\begin{theorem}
Let $M$ be any of the operators $A$, $K$ or $L$. If $\hat{G}$ is obtained from $G$ by weak-deleting $r$ vertices,
\begin{equation*}
        \lambda_{k}(M(G))\leq \lambda_k(M(\hat{G}))\leq \lambda_{k+r}(M(G))\qquad\text{for all }k\in\{1,\ldots,n-r\}.
    \end{equation*}
\end{theorem}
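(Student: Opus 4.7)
The strategy is to reduce each of the three cases to a direct application of the Cauchy Interlacing Theorem (Theorem \ref{thm:Cauchy}), by exhibiting $M(\hat{G})$ — or, failing Hermiticity, a companion matrix with the same spectrum — as a principal submatrix of $M(G)$.

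First I would verify the key structural fact that weak vertex deletion leaves the incidence phase function, the degrees and the adjacency gains on the surviving part of the hypergraph untouched. Concretely, if $u\in V\setminus\{v\}$ and $e\in E$ with $u\in e$, then $\omega_v(u, e\setminus\{v\}) = \omega(u,e)$, so $\deg_{\hat G}(u) = \deg_G(u)$, and for any two surviving vertices $u_1, u_2$ that are adjacent through $e$ one has $\varphi_{e\setminus\{v\}}(u_1,u_2) = \varphi_e(u_1,u_2)$ by the defining formula of $\varphi$. Iterating this over the $r$ vertex deletions shows that $A(\hat G)$ and $D(\hat G)$, and therefore $K(\hat G) = D(\hat G) - A(\hat G)$, are exactly the principal submatrices of $A(G)$, $D(G)$, $K(G)$ obtained by removing the $r$ rows and columns indexed by the deleted vertices.

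For $M\in\{A,K\}$ the matrices are Hermitian by Remark \ref{rmk:Chung}, so the desired inequalities follow immediately from Theorem \ref{thm:Cauchy} applied to $M(\hat G)$ as a principal submatrix of $M(G)$. For $M=L$ the operator is not Hermitian, so I would pass instead to $\mathcal{L} = \mathrm{Id} - D^{-1/2} A D^{-1/2}$, which by Remark \ref{rmk:Chung} is Hermitian and shares its spectrum with $L$. Since surviving degrees are unchanged, $D(\hat G)^{-1/2}$ is itself a principal submatrix of $D(G)^{-1/2}$, and combining this with the analogous statement for $A$ shows that $\mathcal{L}(\hat G)$ is a principal submatrix of $\mathcal{L}(G)$. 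Applying Theorem \ref{thm:Cauchy} to $\mathcal{L}$ and transferring the conclusion to $L$ through the spectral identity $\mathrm{spec}(L)=\mathrm{spec}(\mathcal{L})$ for both $G$ and $\hat G$ completes the $L$-case.

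The only subtle point is the case $M=L$: one must move to the Hermitian companion $\mathcal{L}$ before invoking Cauchy interlacing, and the whole chain of ``principal submatrix'' identities ultimately rests on the small but crucial observation that weak vertex deletion preserves the degrees of surviving vertices. Once this is in hand, the statement is an essentially immediate corollary of Theorem \ref{thm:Cauchy}.
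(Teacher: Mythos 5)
Your proposal is correct and follows essentially the same route as the paper: exhibit $A(\hat G)$, $K(\hat G)$ and $\mathcal{L}(\hat G)$ as principal submatrices of the corresponding matrices for $G$, apply the Cauchy Interlacing Theorem, and transfer the result from $\mathcal{L}$ to $L$ via their cospectrality. The paper states the principal-submatrix facts without proof, so your explicit verification that weak vertex deletion preserves surviving degrees and adjacency gains is a welcome (and correct) addition rather than a deviation.
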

\begin{proof}
Notice that $A(\hat{G})$ and $K(\hat{G})$ are obtained from $A(G)$ and $K(G)$, respectively, by removing the $r$ rows and columns corresponding to the deleted vertices. By Theorem \ref{thm:Cauchy}, this proves the claim for $A$ and $K$.\newline

In order to prove the claim for $L$, recall from Remark \ref{rmk:Chung} that $L$ is cospectral to the Hermitian matrix
    \begin{equation*}
       \mathcal{L}=\textrm{Id}-D^{-1/2}AD^{-1/2},
    \end{equation*}therefore it suffices to prove the claim for $\mathcal{L}$. Since $\mathcal{L}(\hat{G})$ is obtained from $\mathcal{L}(G)$ by removing the $r$ rows and columns corresponding to the deleted vertices, by Theorem \ref{thm:Cauchy} this proves the claim.
\end{proof}

\begin{definition}
Given $e\in E$, we let $G\setminus e:=(V,E\setminus\{e\},\mathcal{I}_e,\omega_e)$, where:
\begin{itemize}
    \item $\mathcal{I}_v:=\mathcal{I}\cap \bigl(V\times (E\setminus\{e\})\bigr)$, and
    \item $\omega_e:=\omega|_{V\times (E\setminus\{e\})}$.
\end{itemize}We say that $G\setminus e$ is obtained from $G$ by a \emph{weak edge deletion} of $e$. We say that $G$ is obtained from $G\setminus e$ by a \emph{weak edge addition} of $e$.
\end{definition}
\begin{theorem}
If $\hat{G}$ is obtained from $G$ by weak-deleting $r$ edges,
\begin{equation*}
        \lambda_{j-r}(K(G))\leq \lambda_j(K(\hat{G}))\leq \lambda_j(K(G))\qquad\text{for all }j\in\{r+1,\ldots,n\}.
    \end{equation*}\end{theorem}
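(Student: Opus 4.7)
The plan is to leverage the factorization $K(G)=B(G)B(G)^+$ from Theorem~\ref{thm:Laplacians} and reduce the claim to a rank-$r$ positive semidefinite perturbation of $K(G)$, then apply Weyl's inequality.

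First, I would note that since $\hat{G}$ is obtained from $G$ by deleting $r$ edges while retaining all vertices, the incidence matrix $B(\hat{G})$ is obtained from $B(G)$ by deleting exactly the $r$ columns corresponding to the deleted edges. Let $C\in(\mathbb{T}\cup\{0\})^{n\times r}$ denote the submatrix of those removed columns. Expanding the product $B(G)B(G)^+$ column by column then yields
\begin{equation*}
    K(G)=B(G)B(G)^+ = B(\hat{G})B(\hat{G})^+ + CC^+ = K(\hat{G}) + P,
\end{equation*}
where $P:=CC^+$ is Hermitian, positive semidefinite, and of rank at most $r$.

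Once this decomposition is in hand, both inequalities follow from standard perturbation bounds. For the upper bound, since $\mathbf{x}^+P\mathbf{x}\geq 0$ for every $\mathbf{x}\in\mathbb{C}^n$, the Courant--Fischer min-max principle applied to $K(\hat{G})=K(G)-P$ gives $\lambda_j(K(\hat{G}))\leq \lambda_j(K(G))$ for every $j$. For the lower bound, I would invoke Weyl's inequality $\lambda_{i+k-1}(X+Y)\geq \lambda_i(X)+\lambda_k(Y)$ for Hermitian $X,Y$, applied to $K(\hat{G})=K(G)+(-P)$. Because $-P$ is negative semidefinite of rank at most $r$, its $(r+1)$-st smallest eigenvalue satisfies $\lambda_{r+1}(-P)=0$; choosing $k=r+1$ and $i=j-r$ then produces $\lambda_j(K(\hat{G}))\geq \lambda_{j-r}(K(G))$ for every $j\in\{r+1,\ldots,n\}$, which is the intended range (the ``$r-1$'' in the displayed statement appears to be a typographical slip for ``$r+1$'').

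The main difficulty is essentially bookkeeping: confirming the rank bound on $P$ and tracking the index shifts in Weyl's inequality. An alternative route, closer in spirit to the vertex-deletion proof just above, would be to note that $K^*(\hat{G})$ is an $(m-r)\times(m-r)$ principal submatrix of $K^*(G)=B^+B$ and apply Theorem~\ref{thm:Cauchy} directly to $K^*$, then transfer the resulting interlacing back to $K$ using Corollary~\ref{cor:E}. That approach is conceptually parallel but requires careful accounting of the differing zero-eigenvalue multiplicities of $K$ and $K^*$ (recall from Corollary~\ref{cor:dual} that $\mu_0(K)-\mu_0(K^*)=n-m$), so the Weyl-type argument above is noticeably cleaner.
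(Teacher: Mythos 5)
Your proof is correct, but it takes a genuinely different route from the paper. The paper observes that $B(\hat{G})$ is obtained from $B(G)$ by deleting $r$ columns, so that the dual Laplacian $K^*(\hat{G})=B(\hat{G})^+B(\hat{G})$ is an $(m-r)\times(m-r)$ principal submatrix of $K^*(G)=B(G)^+B(G)$; it then applies the Cauchy Interlacing Theorem to $K^*$ and transfers the conclusion to $K$ via Corollary~\ref{cor:E}. That transfer is exactly the step you flag as delicate: the interlacing one gets is for the eigenvalues of $K^*(G)$ and $K^*(\hat{G})$, which carry different multiplicities of the eigenvalue $0$ than $K(G)$ and $K(\hat{G})$ do (by Corollary~\ref{cor:dual} these differ by $n-m$ and $n-(m-r)$ respectively), and the paper does not spell out the resulting index bookkeeping. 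Your argument instead works directly with the $n\times n$ matrices: writing $K(G)=K(\hat{G})+CC^+$ with $C$ the $n\times r$ block of deleted columns, the upper bound follows from eigenvalue monotonicity under the positive semidefinite perturbation $P=CC^+$, and the lower bound from Weyl's inequality $\lambda_{i+k-1}(X+Y)\geq\lambda_i(X)+\lambda_k(Y)$ with $k=r+1$ and $\lambda_{r+1}(-P)=0$ since $\operatorname{rank}P\leq r$. This buys a cleaner statement of the valid range of $j$ (you correctly note that the printed ``$r-1$'' should be ``$r+1$'', since $\lambda_{j-r}$ is undefined for $j\leq r$) and avoids the dual-spectrum translation entirely, at the modest cost of invoking Weyl's inequality, which the paper does not otherwise use. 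Both arguments are sound; yours is, if anything, the more self-contained of the two.
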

    \begin{proof}
    Observe that $B(\hat{G})$ is obtained from $B(G)$ by deleting the columns of $B(G)$ corresponding to the $r$ deleted edges. Therefore, $K^*(\hat{G})=B(\hat{G})^+B(\hat{G})$ is an $r\times r$ principal submatrix of $K^*(G)=B(G)^+B(G)$. By Corollary \ref{cor:E} together with Theorem \ref{thm:Cauchy}, this proves the claim.
    \end{proof}
\begin{remark}While the Cauchy Interlacing Theorem can be applied to $A$, $K$ and $L$ in the case of weak vertex deletion, it can only be applied to $K$ in the case of weak edge deletion. Interestingly, in the case of simple graphs there are some known interlacing results for $L$ in the case of edge deletion \cite{interlacing1,interlacing2,interlacing3}. Such results don't make use of the Cauchy Interlacing Theorem and generalizing them to the case of hypergraphs remains an open problem.
\end{remark}

\subsection{Vertex and edge switching}
\begin{definition}A \emph{vertex switching function} is any function $\zeta:V\rightarrow \mathbb{T}$. \emph{Vertex-switching} the complex unit hypergraph $G=(V,E,\mathcal{I},\omega)$ means replacing $\omega$ with $\omega^{\zeta}$, defined by
\begin{equation*}
    \omega^\zeta(v,e)=\zeta(v)^{-1}\omega(v,e);
\end{equation*}producing the complex unit hypergraph $G^\zeta=(V,E,\mathcal{I},\omega^\zeta)$.\newline

Similarly, an \emph{edge switching function} is any function $\xi:E\rightarrow \mathbb{T}$. \emph{Edge-switching} the complex unit hypergraph $G=(V,E,\mathcal{I},\omega)$ means replacing $\omega$ with $\omega^{\xi}$, defined by
\begin{equation*}
    \omega^\xi(v,e)=\xi(e)^{-1}\omega(v,e);
\end{equation*}producing the complex unit hypergraph $G^\xi=(V,E,\mathcal{I},\omega^\xi)$.

\end{definition}
\begin{remark}Vertex-switching and edge-switching are equivalence relations.\end{remark}
\begin{definition}
For a vertex-switching function $\zeta$, we let
\begin{equation*}
    D_n(\zeta):=\textrm{diag}(\zeta(v_1),\ldots,\zeta(v_n)).
\end{equation*}For an edge-switching function $\xi$, we let
\begin{equation*}
    D_m(\xi):=\textrm{diag}(\xi(e_1),\ldots,\xi(e_m)).
\end{equation*}
\end{definition}
\begin{lemma}\label{lemma:switching}
If $\zeta$ is a vertex-switching function on $G$,
\begin{itemize}
    \item $B(G^\zeta)=D_n(\zeta)^+B(G)$;
    \item $A(G^\zeta)=D_n(\zeta)^+A(G)D_n(\zeta)$; 
    \item $K(G^\zeta)=D_n(\zeta)^+K(G)D_n(\zeta)$;
    \item $L(G^\zeta)=D_n(\zeta)^+L(G)D_n(\zeta)$.
\end{itemize}Moreover, if $\xi$ is and edge-switching function on $G$,
\begin{itemize}
    \item $B(G^\xi)=B(G)D_m(\xi)$;
    \item $A(G^\xi)=A(G)$; 
    \item $K(G^\xi)=K(G)$;
    \item $L(G^\xi)=L(G)$.
\end{itemize}
\end{lemma}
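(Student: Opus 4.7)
The plan is to verify each identity by direct entrywise computation from the definitions of $B$, $A$, $K$, $L$, $\omega^\zeta$, and $\omega^\xi$. The key structural fact driving all the calculations is that $\zeta(v)$ and $\xi(e)$ are complex units, so $\overline{\zeta(v)}=\zeta(v)^{-1}$ and $\overline{\xi(e)}=\xi(e)^{-1}$; in particular $D_n(\zeta)^+ = D_n(\zeta)^{-1}$ and $D_m(\xi)^+ = D_m(\xi)^{-1}$, and both are diagonal (hence commute with $D$ and $D^{-1}$).

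For vertex-switching, I would first handle $B$: by the definition of $\omega^\zeta$, the $(i,j)$ entry of $B(G^\zeta)$ equals $\zeta(v_i)^{-1}\omega(v_i,e_j)=\overline{\zeta(v_i)}\,B(G)_{ij}$, which is exactly $(D_n(\zeta)^+B(G))_{ij}$. For $A$, I would compute the switched gain function: $\varphi^\zeta_e(v_i,v_j) = -\omega^\zeta(v_i,e)\,\omega^\zeta(v_j,e)^{-1} = \zeta(v_i)^{-1}\varphi_e(v_i,v_j)\,\zeta(v_j)$, so summing over $e$ gives $a^\zeta_{ij}=\zeta(v_i)^{-1}a_{ij}\zeta(v_j)$, which is the $(i,j)$ entry of $D_n(\zeta)^+ A(G) D_n(\zeta)$. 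For $K$, note that the degree matrix is invariant under switching, and $D_n(\zeta)^+D\, D_n(\zeta) = D$ because $D$ is diagonal and $D_n(\zeta)^+D_n(\zeta)=\mathrm{Id}$, so $K(G^\zeta) = D - A(G^\zeta) = D_n(\zeta)^+(D-A(G))D_n(\zeta)$. For $L$, use $L(G^\zeta)=D^{-1}K(G^\zeta) = D^{-1}D_n(\zeta)^+ K(G) D_n(\zeta)$, and slide the diagonal $D_n(\zeta)^+$ past $D^{-1}$ to obtain $D_n(\zeta)^+L(G)D_n(\zeta)$.

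For edge-switching, the computation of $B(G^\xi)$ is similarly a one-line entry comparison using $\omega^\xi(v_i,e_j)=\xi(e_j)^{-1}\omega(v_i,e_j)$ and the diagonal structure of $D_m(\xi)$ (acting on the right). The crucial observation for the other three matrices is that in $\varphi^\xi_e(v_i,v_j)=-\xi(e)^{-1}\omega(v_i,e)\cdot\bigl(\xi(e)^{-1}\omega(v_j,e)\bigr)^{-1}$, the two factors of $\xi(e)$ cancel, giving $\varphi^\xi_e = \varphi_e$ and hence $A(G^\xi)=A(G)$. Since degrees are manifestly preserved by edge-switching, $D$ is unchanged, and therefore $K$ and $L$ are unchanged as well.

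There is no real obstacle; the only care needed is to distinguish conjugate from inverse and to track which diagonal matrices commute with which. If the statement is to be read literally with $D_m(\xi)$ on the right rather than $D_m(\xi)^+$, the computation above suggests the correct normalization, but in either case the proof is a bookkeeping exercise with the unit-modulus identities above.
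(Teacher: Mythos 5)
Your computation is correct and complete; the paper actually states this lemma without any proof, and the direct entrywise verification you give (using $\overline{\zeta(v)}=\zeta(v)^{-1}$, the cancellation $\varphi_e^\xi=\varphi_e$, and the invariance of $D$ under switching) is plainly the intended argument. You are also right to flag the normalization in the first edge-switching identity: with $\omega^\xi(v,e)=\xi(e)^{-1}\omega(v,e)$ and $D_m(\xi)=\mathrm{diag}(\xi(e_1),\ldots,\xi(e_m))$ as defined, the identity should read $B(G^\xi)=B(G)D_m(\xi)^+$, a typo in the statement that does not affect the remaining three identities.
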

\begin{corollary}
For any vertex-switching function $\zeta$, $G$ and $G^\zeta$ are cospectral with respect to $A$, $K$, $K^*$, $L$ and $L^*$.
\end{corollary}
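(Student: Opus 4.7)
The plan is to derive everything directly from Lemma \ref{lemma:switching} together with the key observation that $D_n(\zeta)$ is a diagonal matrix whose diagonal entries lie in $\mathbb{T}$. This means $D_n(\zeta)$ is unitary, i.e. $D_n(\zeta)^+ D_n(\zeta) = D_n(\zeta) D_n(\zeta)^+ = \mathrm{Id}$, equivalently $D_n(\zeta)^+ = D_n(\zeta)^{-1}$.

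First I would handle $A$, $K$ and $L$. Lemma \ref{lemma:switching} gives $M(G^\zeta) = D_n(\zeta)^+ M(G) D_n(\zeta)$ for each $M \in \{A, K, L\}$. Using $D_n(\zeta)^+ = D_n(\zeta)^{-1}$, this is a similarity transformation, so $M(G)$ and $M(G^\zeta)$ have the same spectrum (including multiplicities). This step is essentially a one-liner.

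Next I would handle the dual operators $K^*$ and $L^*$, and here I would prove the stronger statement that they are actually unchanged under vertex switching (not merely cospectral). For $K^*$, applying Lemma \ref{lemma:switching} gives
\begin{equation*}
K^*(G^\zeta) = B(G^\zeta)^+ B(G^\zeta) = B(G)^+ D_n(\zeta) D_n(\zeta)^+ B(G) = B(G)^+ B(G) = K^*(G),
\end{equation*}
using unitarity. For $L^*$, the same substitution yields
\begin{equation*}
L^*(G^\zeta) = B(G)^+ D_n(\zeta) D^{-1} D_n(\zeta)^+ B(G),
\end{equation*}
and since $D_n(\zeta)$ and $D^{-1}$ are both diagonal they commute, so $D_n(\zeta) D^{-1} D_n(\zeta)^+ = D^{-1} D_n(\zeta) D_n(\zeta)^+ = D^{-1}$, giving $L^*(G^\zeta) = L^*(G)$.

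I do not anticipate any real obstacle: the whole argument reduces to the unitarity of $D_n(\zeta)$ and, for the starred operators, the fact that diagonal matrices commute. The only thing worth flagging is that one should explicitly invoke unitarity rather than just inverting $D_n(\zeta)^+$ on each side, since this is what turns the conjugations from Lemma \ref{lemma:switching} into genuine similarities and simultaneously collapses $D_n(\zeta) D_n(\zeta)^+$ to the identity in the dual case.
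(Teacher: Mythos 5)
Your proof is correct and is essentially the argument the paper intends: the corollary is stated as an immediate consequence of Lemma \ref{lemma:switching}, with the unitarity of $D_n(\zeta)$ turning the conjugations into similarities for $A$, $K$, $L$, and collapsing $D_n(\zeta)D_n(\zeta)^+$ (together with commutativity of diagonal matrices) to show $K^*$ and $L^*$ are literally unchanged. Your write-up just makes these steps explicit.
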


\section{Smallest and largest eigenvalue}\label{Section: smallest and largest}
In this section we apply the Courant–Fischer–Weyl min-max principle \cite[Theorem 4.2.2]{MatrixAnalysis} and other preliminary lemmas in order to
estimate the smallest and the largest eigenvalue of $A$, $K$ and $L$, respectively.

\subsection{Preliminary results}
\begin{lemma}[Courant–Fischer–Weyl min-max principle]\label{minmax}
Let $M\in \mathbb{C}^{n\times n}$ be Hermitian. Then
\begin{equation*}
    \lambda_1(M)=\min_{\mathbf{x}\in \mathbb{C}^{n}\setminus\{0\}}\frac{\mathbf{x}^+M\mathbf{x}}{\mathbf{x}^+\mathbf{x}}=\min_{\mathbf{x}^+\mathbf{x}=1}\mathbf{x}^+M\mathbf{x},
\end{equation*}and
\begin{equation*}
    \lambda_n(M)=\max_{\mathbf{x}\in \mathbb{C}^{n}\setminus\{0\}}\frac{\mathbf{x}^+M\mathbf{x}}{\mathbf{x}^+\mathbf{x}}=\max_{\mathbf{x}^+\mathbf{x}=1}\mathbf{x}^+M\mathbf{x}.
\end{equation*}The vectors realizing such min or max are then are corresponding eigenvectors.
\end{lemma}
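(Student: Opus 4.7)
The plan is to derive both equalities from the spectral theorem for Hermitian matrices together with a direct expansion of the Rayleigh quotient in an orthonormal eigenbasis, and then reduce the two formulations to each other by homogeneity.

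First I would invoke the spectral theorem: since $M\in\mathbb{C}^{n\times n}$ is Hermitian, there exists an orthonormal basis $\mathbf{u}_1,\ldots,\mathbf{u}_n$ of $\mathbb{C}^n$ consisting of eigenvectors of $M$, with real eigenvalues $\lambda_1(M)\leq\cdots\leq\lambda_n(M)$. Any $\mathbf{x}\in\mathbb{C}^n$ admits the expansion $\mathbf{x}=\sum_{i=1}^n c_i\mathbf{u}_i$ with $c_i=\mathbf{u}_i^+\mathbf{x}\in\mathbb{C}$, and orthonormality yields $\mathbf{x}^+\mathbf{x}=\sum_i|c_i|^2$ and $\mathbf{x}^+M\mathbf{x}=\sum_i\lambda_i(M)\,|c_i|^2$.

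Next I would use these expansions to bound the Rayleigh quotient. For $\mathbf{x}\neq 0$,
\begin{equation*}
\lambda_1(M)\sum_{i=1}^n |c_i|^2 \;\leq\; \sum_{i=1}^n \lambda_i(M)\,|c_i|^2 \;\leq\; \lambda_n(M)\sum_{i=1}^n |c_i|^2,
\end{equation*}
so that $\lambda_1(M)\leq \mathbf{x}^+M\mathbf{x}/\mathbf{x}^+\mathbf{x}\leq \lambda_n(M)$. Equality on the left is attained by taking $\mathbf{x}=\mathbf{u}_1$ (in which case $c_1=1$ and all other $c_i=0$), and equality on the right by taking $\mathbf{x}=\mathbf{u}_n$, which simultaneously establishes that the extrema are achieved and identifies the optimizers as eigenvectors for the respective extremal eigenvalues.

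Finally I would handle the equivalence of the two forms of each formula by homogeneity: the quotient $\mathbf{x}^+M\mathbf{x}/\mathbf{x}^+\mathbf{x}$ is invariant under $\mathbf{x}\mapsto \alpha\mathbf{x}$ for any $\alpha\in\mathbb{C}\setminus\{0\}$, so the infimum and supremum over $\mathbf{x}\in\mathbb{C}^n\setminus\{0\}$ coincide with those over the unit sphere $\{\mathbf{x}:\mathbf{x}^+\mathbf{x}=1\}$, on which the Rayleigh quotient reduces to $\mathbf{x}^+M\mathbf{x}$. There is no real obstacle here: the only mildly delicate point is making the statement about optimizers precise, since the minimizing or maximizing $\mathbf{x}$ need not be unique when $\lambda_1(M)$ or $\lambda_n(M)$ has multiplicity greater than one, but in that case the optimizers are exactly the nonzero vectors in the corresponding eigenspace, which is what the lemma asserts.
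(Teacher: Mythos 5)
Your proof is correct: it is the standard spectral-theorem argument (expand in an orthonormal eigenbasis, bound the Rayleigh quotient termwise, note attainment at the extremal eigenvectors, and use homogeneity to pass between the two formulations), and the observation that non-uniqueness of optimizers just reflects the eigenspace structure is handled properly. The paper itself gives no proof of this lemma --- it is quoted directly from Horn and Johnson \cite[Theorem 4.2.2]{MatrixAnalysis} --- so there is nothing to compare against; your argument is exactly the textbook one.
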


\begin{proposition}\label{prop:xKx}Let $\mathbf{x}=(x_1,\ldots,x_n)\in\mathbb{C}^n$. Then
\begin{equation*}
    \mathbf{x}^+K\mathbf{x}=\sum_{e\in E}\bigl|\sum_{v_j\in e}\omega(v_j,e)^{-1}x_j\bigr|^2,
\end{equation*}and
\begin{equation*}
    \mathbf{x}^+\mathcal{L}\mathbf{x}=\sum_{e\in E}\biggl|\sum_{v_j\in e}\frac{\omega(v_j,e)^{-1}}{{\sqrt{deg(v_i)}}}x_j\biggr|^2.
\end{equation*}
\end{proposition}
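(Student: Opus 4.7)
The plan is to reduce everything to Theorem \ref{thm:Laplacians}, which gives $K=BB^+$, and the analogous identity $\mathcal{L}=D^{-1/2}KD^{-1/2}=D^{-1/2}BB^+D^{-1/2}$ noted in Remark \ref{rmk:Chung}. With these, both quadratic forms become norms of $B^+$ applied to a vector, and the claim follows by expanding componentwise.

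Concretely, I would first write
\[
\mathbf{x}^+K\mathbf{x}=\mathbf{x}^+BB^+\mathbf{x}=(B^+\mathbf{x})^+(B^+\mathbf{x})=\|B^+\mathbf{x}\|^2,
\]
and then compute the $j$-th entry of $B^+\mathbf{x}$ directly from the definition of $B$. Since $B_{ij}=\omega(v_i,e_j)$ when $v_i\in e_j$ and $0$ otherwise, we get $(B^+)_{ji}=\overline{\omega(v_i,e_j)}$. Here is where the only nontrivial observation enters: because $\omega(v_i,e_j)\in\mathbb{T}$ whenever it is nonzero, $\overline{\omega(v_i,e_j)}=\omega(v_i,e_j)^{-1}$. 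Hence
\[
(B^+\mathbf{x})_j=\sum_{v_i\in e_j}\omega(v_i,e_j)^{-1}x_i,
\]
and summing the squared moduli over $j=1,\dots,m$ yields the first formula.

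For the second, I would argue identically after substituting $\mathbf{y}:=D^{-1/2}\mathbf{x}$, so that
\[
\mathbf{x}^+\mathcal{L}\mathbf{x}=\mathbf{x}^+D^{-1/2}BB^+D^{-1/2}\mathbf{x}=\|B^+\mathbf{y}\|^2.
\]
Applying the componentwise expansion of $B^+$ to $\mathbf{y}$ simply replaces each $x_j$ by $x_j/\sqrt{\deg(v_j)}$, producing the stated expression (up to the apparent typo $v_i\mapsto v_j$ inside the square root).

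I do not anticipate a real obstacle: the argument is essentially a bookkeeping exercise, and the only place where any property of complex unit hypergraphs is used is the identification $\overline{\omega(v_i,e_j)}=\omega(v_i,e_j)^{-1}$ on $\mathbb{T}$, which is immediate. Everything else is a direct consequence of results already established in Section \ref{Section First properties}.
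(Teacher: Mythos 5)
Your proposal is correct and follows essentially the same route as the paper: both reduce to $K=BB^+$ and $\mathcal{L}=D^{-1/2}BB^+D^{-1/2}$, expand $B^+\mathbf{x}$ (respectively $B^+D^{-1/2}\mathbf{x}$) componentwise, and invoke $\overline{\omega(v_i,e_j)}=\omega(v_i,e_j)^{-1}$ on $\mathbb{T}$. No gaps.
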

\begin{proof}Observe that, given $\mathbf{x}=(x_1,\ldots,x_n)\in\mathbb{C}^n$.
\begin{equation*}
    (\mathbf{x}^+B)^+=B^+\mathbf{x}=\biggl(\sum_{j=1}^n\overline{\omega(v_j,e_1)}x_j,\ldots,\sum_{j=1}^n\overline{\omega(v_j,e_m)}x_j\biggr).
\end{equation*}Therefore,
\begin{align*}
    \mathbf{x}^+K\mathbf{x}&=\mathbf{x}^+BB^+\mathbf{x}\\
    &=(\mathbf{x}^+B)(\mathbf{x}^+B)^+\\
    &=\sum_{k=1}^m\bigl|\sum_{j=1}^n\overline{\omega(v_j,e_k)}x_j\bigr|^2\\
    &=\sum_{k=1}^m\bigl|\sum_{j=1}^n\omega(v_j,e_k)^{-1}x_j\bigr|^2.
\end{align*}Similarly, since $\mathcal{L}=D^{1/2}LD^{-1/2}=D^{-1/2}BB^+D^{-1/2}$,
\begin{align*}
    \mathbf{x}^+\mathcal{L}\mathbf{x}&=\mathbf{x}^+D^{-1/2}BB^+D^{-1/2}\mathbf{x}\\
    &=(\mathbf{x}^+D^{-1/2}B)(\mathbf{x}^+D^{-1/2}B)^+\\
    &=\sum_{k=1}^m\biggl|\sum_{j=1}^n\frac{\omega(v_j,e_k)^{-1}}{\sqrt{\deg v_j}}x_j\biggr|^2.
    \end{align*}
\end{proof}

\begin{corollary}\label{cor:rq}
The smallest (largest) eigenvalue of $A$, $K$ and $L$, respectively, is the minimizer (maximizer) of the Rayleigh quotient
\begin{equation*}
\RQ_A(\mathbf{x}):=\frac{\sum_{v_i\sim v_j} \sum_{e\in E}\overline{x_i}\cdot \varphi_e(v_i,v_j)\cdot x_j}{\sum_{i=1}^n \overline{x_i}\cdot x_i},
\end{equation*}
\begin{equation*}
    \RQ_K(\mathbf{x}):=\frac{\sum_{e\in E}\bigl|\sum_{v_j\in e}\omega(v_j,e)^{-1}x_j\bigr|^2}{\sum_{i=1}^n \overline{x_i}\cdot x_i}.
\end{equation*}and
\begin{equation*}
    \RQ_L(\mathbf{x}):=\frac{\sum_{e\in E}\bigl|\sum_{v_j\in e}\omega(v_j,e)^{-1}x_j\bigr|^2}{\sum_{i=1}^n \deg v_i\cdot \overline{x_i}\cdot x_i},
\end{equation*}respectively, among $\mathbf{x}\in \mathbb{C}^n\setminus\{0\}$. Moreover, the vectors realizing the minimum (maximum) are the corresponding eigenvectors.
\end{corollary}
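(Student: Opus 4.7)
The plan is to derive each of the three Rayleigh quotient characterizations as a direct application of the Courant–Fischer–Weyl min-max principle (Lemma \ref{minmax}) to an appropriate Hermitian matrix, reusing the computations already done in Proposition \ref{prop:xKx}. Since $A$ and $K$ are Hermitian by Remark \ref{rmk:Chung}, Lemma \ref{minmax} applies to them directly; the only subtlety lies with $L$, which is not Hermitian in general but is similar to the Hermitian matrix $\mathcal{L}$.

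First, for $K$, Proposition \ref{prop:xKx} already identifies $\mathbf{x}^+K\mathbf{x}$ with the numerator of $\RQ_K(\mathbf{x})$, while the denominator is $\mathbf{x}^+\mathbf{x}$, so Lemma \ref{minmax} immediately yields both the extremal characterization of $\lambda_1(K)$ and $\lambda_n(K)$ and the fact that the optimizers are corresponding eigenvectors. Next, for $A$, I would expand $\mathbf{x}^+A\mathbf{x}=\sum_{i,j}\overline{x_i}\,a_{ij}\,x_j$ and substitute the definition $a_{ij}=\sum_{e\in E}\varphi_e(v_i,v_j)$ on pairs $v_i\sim v_j$ (with diagonal entries equal to zero), which collapses the double sum to the numerator of $\RQ_A(\mathbf{x})$; Lemma \ref{minmax} then concludes.

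For $L$, I would pass to the Hermitian companion $\mathcal{L}=D^{1/2}LD^{-1/2}$ from Remark \ref{rmk:Chung}, apply Lemma \ref{minmax} to $\mathcal{L}$, and translate back via the change of variable $\mathbf{y}=D^{1/2}\mathbf{x}$. Under this substitution, the formula for $\mathbf{y}^+\mathcal{L}\mathbf{y}$ in Proposition \ref{prop:xKx} rewrites as the numerator of $\RQ_L(\mathbf{x})$, while $\mathbf{y}^+\mathbf{y}=\sum_i\deg v_i\cdot|x_i|^2$ becomes exactly its denominator. Since $\mathbf{x}\mapsto D^{1/2}\mathbf{x}$ is a bijection of $\mathbb{C}^n\setminus\{0\}$ onto itself and $L$ and $\mathcal{L}$ share the same spectrum by Remark \ref{rmk:Chung}, the extremal values of $\RQ_L$ coincide with $\lambda_1(L)$ and $\lambda_n(L)$.

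The hard part, though still fairly routine, is the eigenvector assertion for $L$: after the substitution, the $\mathcal{L}$-optimizer $\mathbf{y}$ is an eigenvector of $\mathcal{L}$, and the similarity $\mathcal{L}=D^{1/2}LD^{-1/2}$ must be carefully unpacked to conclude that the corresponding optimizer $\mathbf{x}$ of $\RQ_L$ is itself an eigenvector of $L$ for the same eigenvalue. Beyond this bookkeeping between $\mathcal{L}$- and $L$-eigenvectors through the diagonal conjugation by $D^{1/2}$, the whole proof is a mechanical combination of Lemma \ref{minmax} with Proposition \ref{prop:xKx}.
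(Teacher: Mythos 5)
Your proposal is correct and follows essentially the same route as the paper: apply the Courant--Fischer--Weyl min-max principle directly to the Hermitian matrices $A$ and $K$ (using Proposition \ref{prop:xKx} for $K$), and handle $L$ by passing to the similar Hermitian matrix $\mathcal{L}$ and transporting eigenvectors back through the diagonal conjugation by $D^{1/2}$. Your version spells out the change of variables $\mathbf{y}=D^{1/2}\mathbf{x}$ a bit more explicitly than the paper does, but the argument is the same.
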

\begin{proof}The claim follows directly from Lemma \ref{minmax} for $A$, and it follows from Lemma \ref{minmax} and Proposition \ref{prop:xKx} for $K$. Now, Lemma \ref{minmax} and Proposition \ref{prop:xKx} also imply that the smallest (largest) eigenvalue of $\mathcal{L}$ is the minimizer (maximizer) of
\begin{equation*}
    \RQ_\mathcal{L}(\mathbf{x}):=\frac{\sum_{k=1}^m\biggl|\sum_{j=1}^n\frac{\omega(v_j,e_k)^{-1}}{\sqrt{\deg v_j}}x_j\biggr|^2}{\sum_{i=1}^n \overline{x_i}\cdot x_i},
\end{equation*}with the vectors realizing the minimum (maximum) being the corresponding eigenvectors. This proves the claim for $L$, since $\mathbf{x}$ is an eigenvector for $\mathcal{L}$ with eigenvalue $\lambda$ if and only if $D^{1/2}\mathbf{x}$ is an eigenvector for $L$ with eigenvalue $\lambda$.
\end{proof}

The next lemma \cite[Theorem 6.1.1]{MatrixAnalysis} is often called the Geršgorin disc Theorem.
\begin{lemma}[Geršgorin]\label{lemma:discs}
Let $M=(m_{ij})\in\mathbb{C}^{n\times n}$. The eigenvalues of $M$ lie in the union of Geršgorin discs
\begin{equation*}
    \bigcup_{i=1}^n\biggl\{z\in\mathbb{C}:|z-m_{ii}|\leq \sum_{j\neq i}|m_{ij}|\biggr\}.
\end{equation*}
\end{lemma}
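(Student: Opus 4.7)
The plan is to give the classical one-line proof by picking the dominant coordinate of an eigenvector. Let $\lambda$ be any eigenvalue of $M$ with eigenvector $\mathbf{x}=(x_1,\ldots,x_n)\neq 0$, and choose an index $i$ such that $|x_i|=\max_{1\le j\le n}|x_j|$. Since $\mathbf{x}\neq 0$, this maximum is strictly positive, so $x_i\neq 0$.

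Next I would expand the $i$-th coordinate of the eigenvalue equation $M\mathbf{x}=\lambda\mathbf{x}$ to obtain
\begin{equation*}
    \sum_{j=1}^n m_{ij}x_j=\lambda x_i,
\end{equation*}and isolate the diagonal term as
\begin{equation*}
    (\lambda-m_{ii})x_i=\sum_{j\neq i}m_{ij}x_j.
\end{equation*}

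Then I would take absolute values on both sides, apply the triangle inequality to the right-hand side, and divide through by $|x_i|>0$, using that $|x_j|/|x_i|\leq 1$ for every $j$ by the choice of $i$:
\begin{equation*}
    |\lambda-m_{ii}|=\Bigl|\sum_{j\neq i}m_{ij}\,\frac{x_j}{x_i}\Bigr|\leq \sum_{j\neq i}|m_{ij}|\cdot\frac{|x_j|}{|x_i|}\leq \sum_{j\neq i}|m_{ij}|.
\end{equation*}
This shows that $\lambda$ lies in the $i$-th Geršgorin disc, and hence in the asserted union.

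There is essentially no obstacle here: the only subtle point is remembering to select the coordinate of maximum modulus so that the ratios $|x_j|/|x_i|$ are bounded by $1$, which is exactly what makes the triangle inequality yield the disc bound. Since the choice of $i$ may depend on $\lambda$ (and on the eigenvector chosen), the conclusion is only that each eigenvalue lies in \emph{some} disc, i.e.\ in the union over $i$, which matches the statement.
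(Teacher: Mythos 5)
Your proof is correct and complete: it is the classical argument via the coordinate of maximum modulus, and the key subtlety (choosing $i$ so that $|x_j|/|x_i|\leq 1$, which is what makes the triangle inequality close) is handled properly. The paper itself does not prove this lemma but simply cites it as \cite[Theorem 6.1.1]{MatrixAnalysis}; your argument is the standard proof given in that reference, so there is nothing to reconcile.
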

\begin{definition}
The \emph{spectral radius} of $M\in \mathbb{C}^{n\times n}$ is
\begin{equation*}
    \rho(M):=\max\{|\lambda_i|:\lambda_i\text{ is an eigenvalue of } M\}.
\end{equation*}
\end{definition}
\begin{corollary}\label{cor:discs}
Let $M=(m_{ij})\in\mathbb{C}^{n\times n}$. Then,
\begin{equation*}
    \rho(M)\leq \max_{i\in\{1,\ldots,n\}} \biggl( |m_{ii}|+\sum_{j\neq i}|m_{ij}|\biggr).
\end{equation*}
\end{corollary}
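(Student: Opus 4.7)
The plan is to derive the bound directly from Lemma \ref{lemma:discs} (Geršgorin) using only the triangle inequality. Let $\lambda$ be any eigenvalue of $M$. By the Geršgorin disc theorem, there exists some index $i\in\{1,\ldots,n\}$ such that
\begin{equation*}
    |\lambda - m_{ii}| \leq \sum_{j\neq i} |m_{ij}|.
\end{equation*}

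Next I would apply the triangle inequality in the form $|\lambda| \leq |m_{ii}| + |\lambda - m_{ii}|$, which combined with the previous bound yields
\begin{equation*}
    |\lambda| \leq |m_{ii}| + \sum_{j\neq i}|m_{ij}| \leq \max_{k\in\{1,\ldots,n\}}\biggl(|m_{kk}| + \sum_{j\neq k}|m_{kj}|\biggr).
\end{equation*}
Since this upper bound holds for every eigenvalue $\lambda$ of $M$, it also holds for the one of maximal modulus, which by definition equals $\rho(M)$. This gives the desired inequality.

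There is no real obstacle here — the corollary is a one-line consequence of Geršgorin's theorem, and the only step worth writing out carefully is the triangle inequality that passes from the bound on $|\lambda - m_{ii}|$ to a bound on $|\lambda|$ itself.
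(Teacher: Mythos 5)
Your proof is correct and follows essentially the same route as the paper: both apply the Geršgorin disc theorem to an eigenvalue and then use the triangle inequality $|\lambda|\leq |m_{ii}|+|\lambda-m_{ii}|$ to pass from the disc bound to a bound on $|\lambda|$. The only cosmetic difference is that you bound an arbitrary eigenvalue and then take the maximum, whereas the paper applies Geršgorin directly to the eigenvalue realizing $\rho(M)$; the content is identical.
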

\begin{proof}
By Lemma \ref{lemma:discs}, there exists $i\in\{1,\ldots,n\}$ such that
\begin{equation*}
 \rho(M)-|m_{ii}|\leq \sum_{j\neq i}|m_{ij}|.
\end{equation*}That is,
\begin{equation*}
    \rho(M)\leq |m_{ii}|+\sum_{j\neq i}|m_{ij}|.
\end{equation*}The claim follows.
\end{proof}

\subsection{Upper and lower bounds}
Let $\Delta$ and $\nabla$ denote the maximum vertex degree and the maximum edge size of $G$, respectively.
\begin{theorem}\label{thm:rho}
\begin{equation*}
   \rho(A)\leq \Delta(\nabla-1),
\end{equation*}
and this inequality is sharp.
\end{theorem}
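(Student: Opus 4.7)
The plan is to apply the Geršgorin disc corollary (Corollary \ref{cor:discs}) directly to the Hermitian matrix $A$. Since adjacency is defined only between distinct vertices, the diagonal entries $a_{ii}$ vanish, so the bound reduces to estimating $\sum_{j\neq i}|a_{ij}|$ for each row $i$.

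First I would unfold the definition: for $j\neq i$,
\begin{equation*}
|a_{ij}| \;=\; \Bigl|\sum_{e\in E}\varphi_e(v_i,v_j)\Bigr| \;\leq\; \sum_{e\in E \,:\, v_i,v_j\in e}|\varphi_e(v_i,v_j)| \;=\; \#\{e\in E : v_i,v_j\in e\},
\end{equation*}
using that $|\varphi_e(v_i,v_j)|=1$ whenever defined. Summing over $j\neq i$ and swapping the order of summation,
\begin{equation*}
\sum_{j\neq i}|a_{ij}| \;\leq\; \sum_{e\ni v_i}\#\{v_j\in e : v_j\neq v_i\} \;=\; \sum_{e\ni v_i}(|e|-1) \;\leq\; \deg(v_i)\cdot(\nabla-1)\;\leq\;\Delta(\nabla-1).
\end{equation*}
Combining with $a_{ii}=0$ and applying Corollary \ref{cor:discs} gives $\rho(A)\leq \Delta(\nabla-1)$.

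For sharpness, I would exhibit a concrete example achieving equality. The simplest choice is the hypergraph consisting of a single edge $e$ of size $k=\nabla$ with all incidence phases $\omega(v_i,e)=1$. Then $\Delta=1$, and for every pair $v_i\neq v_j\in e$, $\varphi_e(v_i,v_j)=-1$, so (after restricting to the $k$ vertices of $e$) $A = -(J-I)$ where $J$ is the all-ones matrix. The eigenvalues of $J-I$ are $k-1$ (simple) and $-1$ (with multiplicity $k-1$), giving $\rho(A)=k-1=\Delta(\nabla-1)$.

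I do not anticipate a real obstacle; both the Geršgorin-based upper bound and the single-edge example are elementary. The only subtle point worth flagging is that the triangle-inequality step discards all cancellation among the gain values $\varphi_e(v_i,v_j)$ for distinct edges $e$ through the same pair $\{v_i,v_j\}$, which is precisely why the bound is attained only when each pair of adjacent vertices lies in a single edge (as in the single-edge example), rather than when there is multiplicity of joint edges.
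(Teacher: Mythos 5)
Your proposal is correct and follows essentially the same route as the paper: Geršgorin's disc bound applied to $A$ with zero diagonal, the triangle inequality on $|a_{ij}|$, and the single all-ones edge as the sharpness example (the paper likewise obtains $A=\mathrm{Id}-\mathrm{J}$ with $\rho(A)=n-1=\Delta(\nabla-1)$). Your intermediate step bounding $\sum_{j\neq i}|a_{ij}|$ by $\sum_{e\ni v_i}(|e|-1)$ just spells out the inequality the paper states in one line.
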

\begin{proof} By Corollary \ref{cor:discs},
\begin{equation*}
    \rho(A)\leq \max_{i\in\{1,\ldots,n\}}\sum_{j\neq i}|a_{ij}|=\max_{i\in\{1,\ldots,n\}}\sum_{j\neq i}\sum_{e\in E:v_i,v_j\in e}|\varphi_e(v_i,v_j)|\leq \Delta(\nabla-1).
\end{equation*}To see that the above inequality is sharp, let $G=(V,E,\mathcal{I},\omega)$ be such that:
\begin{itemize}
\item $V=\{v_1,\ldots,v_n\}$ and $E=\{e\}$;
\item $\mathcal{I}=V\times E$;
\item $\omega(v,e)=1$ for all $v\in V$.
\end{itemize}Then, $G$ is a $1$-regular hypergraph and in particular $\Delta=1$, while $\nabla=n$. The adjacency matrix $A=(a_{ij})$ is such that
\begin{equation*}
        a_{ij}=\begin{cases}0 &\text{ if }i=j,\\ -1 &\text{ if }i\neq j.\end{cases}
    \end{equation*}Hence, $A=\textrm{Id}-\mathrm{J}$, where $\mathrm{J}$ is the matrix of ones, and $K=\textrm{Id}-A=\mathrm{J}$. This implies that the eigenvalues of $A$ are $1-n$, with multiplicity $1$, and $1$, with multiplicity $n-1$. In particular, $\rho(A)=n-1=\Delta(\nabla-1)$.\qedhere
\end{proof}

\begin{definition}
The \emph{underlying hypergraph} of $G=(V,E,\mathcal{I},\omega)$ is $G':=(V,E,\mathcal{I},\omega')$, where
\begin{equation*}
    \omega'(v,e):=\begin{cases} 1 & \text{ if } v\in e,\\ 0 & \text{ otherwise.}\end{cases}
\end{equation*}
\end{definition}

For a graph $\Gamma$, the signless Laplacian $Q(\Gamma)=D(\Gamma)+A(\Gamma)$ has received a growing amount of attention.  When finding upper bounds for the Kirkohff Laplacian spectral radius of a graph, it turns out that signless Laplacian can be used since $\lambda_n(K(\Gamma)) \leq \lambda_n(Q(\Gamma))$.  This universal upper bound extends to more general settings of signed graphs \cite{MR1950410} and $\mathbb{T}$-gain graphs \cite{MR2900705}.  More recently, this has also been generalized to the setting of oriented hypergraphs \cite{hyp2014}.  This further generalizes to complex unit hypergraphs, where all of the above structures can be viewed as specializations of the first inequality in the following theorem.

\begin{theorem}
Let $G'$ be the underlying hypergraph of $G$. Then,
\begin{equation*}\label{upperboundK}
    \lambda_n(K(G))\leq \lambda_n(K(G'))\leq \nabla\cdot \Delta,
\end{equation*}and $\lambda_n(K(G'))\leq \nabla\cdot \Delta$ is an equality if and only if $G$ is $\Delta$-regular and $\nabla$-uniform. Similarly,
\begin{equation*}\label{upperboundL}
    \lambda_n(L(G))\leq \lambda_n(L(G'))\leq \nabla,
\end{equation*}and $\lambda_n(L(G'))\leq \nabla$ is an equality if and only if $G$ is $\nabla$-uniform. 
\end{theorem}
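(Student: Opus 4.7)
The plan is to apply the Rayleigh--quotient characterisation from Corollary \ref{cor:rq} together with the explicit quadratic forms from Proposition \ref{prop:xKx}, then chain two elementary inequalities: a triangle inequality to pass from $G$ to $G'$, and a Cauchy--Schwarz to reach $\nabla\cdot\Delta$ (resp.\ $\nabla$).

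For the first inequality in \eqref{upperboundK}, I would fix $\mathbf{x}=(x_1,\ldots,x_n)\in\mathbb{C}^n$ and note that, since $|\omega(v_j,e)^{-1}|=1$ for $v_j\in e$, the triangle inequality yields
\begin{equation*}
\bigl|\sum_{v_j\in e}\omega(v_j,e)^{-1}x_j\bigr|^2 \;\leq\; \bigl(\sum_{v_j\in e}|x_j|\bigr)^2.
\end{equation*}
Setting $\mathbf{y}:=(|x_1|,\ldots,|x_n|)^T$ and using Proposition \ref{prop:xKx} for $G$ and $G'$ (where $\omega'(v_j,e)^{-1}=1$), this gives $\mathbf{x}^+K(G)\mathbf{x}\leq \mathbf{y}^TK(G')\mathbf{y}$, while $\mathbf{y}^T\mathbf{y}=\mathbf{x}^+\mathbf{x}$. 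Corollary \ref{cor:rq} then implies $\lambda_n(K(G))\leq \lambda_n(K(G'))$. The $L$-version is identical, working with $\mathcal{L}$ and dividing each entry by $\sqrt{\deg v_j}$.

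For $\lambda_n(K(G'))\leq \nabla\cdot\Delta$, I would apply Cauchy--Schwarz edge by edge:
\begin{equation*}
\bigl(\sum_{v_j\in e}x_j\bigr)^2 \leq |e|\sum_{v_j\in e}|x_j|^2 \leq \nabla\sum_{v_j\in e}|x_j|^2.
\end{equation*}
Summing over $e\in E$ and swapping the order of summation, the right-hand side becomes $\nabla\sum_i\deg(v_i)|x_i|^2\leq \nabla\Delta\,\|\mathbf{x}\|^2$, giving the bound via Corollary \ref{cor:rq}. The same strategy works for $L$: applying Cauchy--Schwarz to $\sum_{v_j\in e}x_j/\sqrt{\deg v_j}$ and summing telescopes the $\deg v_j$ factors into $\sum_i|x_i|^2$, yielding $\lambda_n(\mathcal{L}(G'))\leq \nabla$ and therefore $\lambda_n(L(G'))\leq\nabla$. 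Note that the maximum degree $\Delta$ is absent here because the normalization already absorbs it; this explains why the equality characterisation for $L$ only involves $\nabla$-uniformity.

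For the sufficiency in the equality statements, I would test $\mathbf{x}=\mathbf{1}$: when $G$ is $\Delta$-regular and $\nabla$-uniform, $\mathbf{1}^TK(G')\mathbf{1}=\sum_e|e|^2=m\nabla^2$ and $\|\mathbf{1}\|^2=n$, and the handshake identity $n\Delta=m\nabla$ yields $m\nabla^2/n=\nabla\Delta$; similarly $\mathbf{1}^T\mathcal{L}(G')\mathbf{1}$ computed via Proposition \ref{prop:xKx} gives $\nabla$ when $G$ is $\nabla$-uniform. For necessity, I would trace the chain of inequalities: equality in the Cauchy--Schwarz step on each edge $e$ with nonzero contribution forces $|e|=\nabla$ and $x_j$ constant on $e$, while equality in $\sum_i\deg(v_i)|x_i|^2\leq \Delta\|\mathbf{x}\|^2$ forces $\deg v_i=\Delta$ on the support of $\mathbf{x}$. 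The main obstacle is turning the ``on the support of $\mathbf{x}$'' statements into global uniformity/regularity; I would handle this by observing that $K(G')$ has non-negative off-diagonal entries, so a Perron--Frobenius style argument on each connected component (or a simple component-wise decomposition) produces a maximising eigenvector that is strictly positive where it is supported, allowing us to propagate $|e|=\nabla$ and $\deg v_i=\Delta$ to the whole hypergraph.
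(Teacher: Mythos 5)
Your argument for both chains of inequalities is essentially the paper's own: pass from $G$ to $G'$ by the triangle inequality applied inside each edge of the quadratic form of Proposition \ref{prop:xKx} (noting that $(|x_1|,\ldots,|x_n|)$ has the same norm, respectively the same $D$-weighted norm), then bound $\lambda_n(K(G'))$ by Cauchy--Schwarz edge by edge followed by $\sum_e\sum_{v_i\in e}|y_i|^2=\sum_i\deg(v_i)|y_i|^2\le\Delta\|\mathbf{y}\|^2$. Where you genuinely depart from the paper is the bound $\lambda_n(L(G'))\le\nabla$: the paper simply cites \cite{Sharp} for this, whereas your Cauchy--Schwarz applied to $\sum_{v_j\in e}x_j/\sqrt{\deg v_j}$, with the degrees telescoping to $\|\mathbf{x}\|^2$, gives a clean self-contained proof; this is a worthwhile addition. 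Your explicit sufficiency check is also more careful than the paper's (which only traces equality conditions through the chain), but note a small slip: for $\mathcal{L}(G')$ the correct test vector is $D^{1/2}\mathbf{1}$ (equivalently, evaluate $\RQ_L$ at $\mathbf{1}$, whose denominator is $\vol V=m\nabla$), not $\mathbf{1}$ itself --- with $\mathbf{1}$ the quadratic form of Proposition \ref{prop:xKx} produces $\sum_e\bigl(\sum_{v_j\in e}\deg(v_j)^{-1/2}\bigr)^2$, which equals $\nabla n$ only if degrees are constant on each edge.

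On the necessity direction you have correctly isolated the real difficulty, which the paper's proof silently skips: equality in the chain only forces $|e|=\nabla$, $\deg v_i=\Delta$ and constancy of $y$ on the \emph{support} of the maximising eigenvector. Your observation that $K(G')=B(G')B(G')^+$ is entrywise non-negative (its $(i,j)$ entry counts edges containing both $v_i$ and $v_j$) is right, and Perron--Frobenius does give a top eigenvector that is strictly positive on each connected component on which $\lambda_n(K(G'))$ is attained. But this does not propagate to the whole hypergraph: if $G$ is disconnected, a component realising $\lambda_n=\nabla\Delta$ can coexist with a component of smaller degrees and edge sizes, so the ``only if'' statement in \eqref{upperboundK} (and likewise in \eqref{upperboundL}) is really a statement about connected hypergraphs. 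Your patch therefore closes the gap exactly in the connected case, which is the implicit hypothesis under which the paper's equality claim (and the result quoted from \cite{Sharp}) should be read; for disconnected $G$ no argument can rescue the characterisation as stated.
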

\begin{proof}
Let $\mathbf{x}=(x_1,\ldots,x_n)\in\mathbb{C}^n$ be a unit eigenvector of $K(G)$ with corresponding eigenvalue $\lambda_n(K(G))$. Then, by Corollary \ref{cor:rq},
\begin{equation*}
    \lambda_n(K(G))=\sum_{e\in E}\biggl(\sum_{v_i\in e}\omega(v_i,e)^{-1}x_i\biggr)^2
    \leq \sum_{e\in E}\Biggl(\sum_{v_i\in e}|x_i|\Biggr)^2\leq \lambda_n(K(G')).
\end{equation*}Similarly, let $\mathbf{y}=(y_1,\ldots,y_n)\in\mathbb{C}^n$ be a unit eigenvector of $K(G')$ with corresponding eigenvalue $\lambda_n(K(G'))$. Then
\begin{equation}\label{eq:G'}
     \lambda_n(K(G'))=\sum_{e\in E}\biggl(\sum_{v_i\in e}y_i\biggr)^2
    \leq \sum_{e\in E}\Biggl(\sum_{v_i\in e}|y_i|\Biggr)^2\leq \lambda_n(K(G')),
\end{equation}therefore all inequalities in \eqref{eq:G'} are equalities. Now, for each $e\in E$,
\begin{align*}
    \Biggl(\sum_{v_i\in e}|y_i|\Biggr)^2&=\sum_{v_i\in e} y_i^2+\sum_{\{i,j\}:v_i\neq v_j\in e}2\cdot |y_i|\cdot |y_j|\\
    &\leq \sum_{v_i\in e} y_i^2+\sum_{\{i,j\}:v_i\neq v_j\in e}\left(y_i^2+ y_j^2\right)\\
    &= \sum_{v_i\in e}y_i^2+\sum_{v_i\in e}(|e|-1) \cdot y_i^2\\
    &=|e|\cdot \sum_{v_i\in e} y_i^2,
\end{align*}with equality if and only if $y_i$ is constant for all $v_i\in e$. Hence,
\begin{align*}
    \lambda_n(K(G'))&\leq \sum_{e\in E}|e|\cdot \Biggl(\sum_{v_i\in e} y_i^2\Biggr)\\ &\leq \nabla \cdot \Biggl(\sum_{e\in E,v_i\in e}y_i^2\Biggr)\\ &=\nabla\cdot \Biggl(\sum_{v_i\in V}\deg v_i\cdot y_i^2\Biggr)\\
    &\leq \nabla \cdot \Delta \cdot \Biggl(\sum_{v_i\in V}y_i^2\Biggr)\\
    &=\nabla \cdot \Delta,
\end{align*}with equality if and only if $\deg v_i=\Delta$ is constant for all $v_i\in V$, $|e|=\nabla$ is constant for all $e\in E$ and, for all $e$, $y_i$ is constant for all $v_i\in e$. This proves the claim for $K$. Now, proving that
\begin{equation*}
    \lambda_n(L(G))\leq \lambda_n(L(G')),
\end{equation*}can be done in the same way as for $K$, by normalizing $\mathbf{x}=(x_1,\ldots,x_n)\in\mathbb{C}^n$ so that $\sum_{i=1}^n\deg v_i\cdot x_i=1$. The fact that $\lambda_n(L(G'))\leq \nabla$, with equality if and only if $G$ is $\nabla$-uniform, is proved in \cite{Sharp}.
\end{proof}
Given $S\subseteq V$, let $\vol S:=\sum_{v\in S}\deg v$.
\begin{theorem}\label{thm:S}
If $S \subseteq V$ is such that, for each $v \in S$, $\omega(v):=\omega(v,e)$ is constant for all $e\in E$ with $v\in e$, then
\begin{equation*}
    \lambda_1(K)\leq \frac{\sum_{e\in E}|e\cap S|^2}{|S|}\leq \lambda_n(K),
\end{equation*}and
\begin{equation*}
    \lambda_1(L)\leq \frac{\sum_{e\in E}|e\cap S|^2}{\vol S}\leq \lambda_n(L).
\end{equation*}
\end{theorem}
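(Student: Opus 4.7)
The plan is to apply the Rayleigh quotient characterization from Corollary \ref{cor:rq}: for any Hermitian matrix $M$, plugging a single nonzero vector $\mathbf{x}$ into $\RQ_M$ produces a value that automatically lies between $\lambda_1(M)$ and $\lambda_n(M)$. So it suffices to exhibit one well-chosen test vector $\mathbf{x}$ whose Rayleigh quotients for $K$ and $L$ are exactly $\frac{\sum_{e\in E}|e\cap S|^2}{|S|}$ and $\frac{\sum_{e\in E}|e\cap S|^2}{\vol S}$, respectively. (If $S=\emptyset$ the bounds are trivial, so assume $S\neq \emptyset$.)

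The natural candidate, given the hypothesis that $\omega(v,e)$ depends only on $v$ for $v\in S$, is
\begin{equation*}
    x_i := \begin{cases} \omega(v_i) & \text{if } v_i\in S,\\ 0 & \text{otherwise.}\end{cases}
\end{equation*}
This is nonzero since $|\omega(v_i)|=1$ for $v_i\in S$, and it ``cancels out'' the incidence phases appearing inside $\RQ_K$ and $\RQ_L$.

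The key computation is the numerator of both Rayleigh quotients from Proposition \ref{prop:xKx}. For each edge $e\in E$, using that $\omega(v_j,e)=\omega(v_j)$ whenever $v_j\in e\cap S$,
\begin{equation*}
    \sum_{v_j\in e}\omega(v_j,e)^{-1}x_j \;=\; \sum_{v_j\in e\cap S}\omega(v_j)^{-1}\omega(v_j) \;=\; |e\cap S|,
\end{equation*}
so $\sum_{e\in E}\bigl|\sum_{v_j\in e}\omega(v_j,e)^{-1}x_j\bigr|^2 = \sum_{e\in E}|e\cap S|^2$. The denominators are immediate: $\sum_i |x_i|^2 = \sum_{v_i\in S}1 = |S|$ and $\sum_i \deg v_i\, |x_i|^2 = \sum_{v_i\in S}\deg v_i = \vol S$.

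Combining these with Corollary \ref{cor:rq} yields $\lambda_1(K)\leq \RQ_K(\mathbf{x})\leq \lambda_n(K)$ and $\lambda_1(L)\leq \RQ_L(\mathbf{x})\leq \lambda_n(L)$, which are the desired inequalities. There is no serious obstacle here: the hypothesis was designed precisely so that the phases $\omega(v_j,e)^{-1}x_j$ collapse to $1$ for $v_j\in e\cap S$, turning the numerators into a purely combinatorial expression.
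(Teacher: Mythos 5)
Your proposal is correct and follows essentially the same approach as the paper: both use the test vector $x_i=\omega(v_i)$ on $S$ and $0$ elsewhere, then apply the Rayleigh quotient characterization from Corollary \ref{cor:rq}. You merely spell out the cancellation $\sum_{v_j\in e}\omega(v_j,e)^{-1}x_j=|e\cap S|$ a bit more explicitly than the paper does.
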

\begin{proof}Let $S\subseteq V$ be such that, for each $v \in S$, $\omega(v):=\omega(v,e)$ is constant for all $e\in E$ with $v\in e$. Let $\mathbf{x}=(x_1,\ldots,x_n)\in\mathbb{C}^n$ be defined by
\begin{equation*}
    x_i:=\begin{cases}\omega(v_i) &\text{ if }v_i\in S,\\
    0 & \text{ otherwise.}
    \end{cases}
\end{equation*}Then, by Corollary \ref{cor:rq},

\begin{equation*}
   \lambda_1(K)\leq \RQ_K(\mathbf{x})=\frac{\sum_{e\in E}\bigl|\sum_{v_j\in e}\omega(v_j,e)^{-1}x_j\bigr|^2}{\sum_{i=1}^n \overline{x_i}\cdot x_i}=\frac{\sum_{e\in E}|e\cap S|^2}{|S|}\leq \lambda_n(K),
\end{equation*}and
\begin{equation*}
    \lambda_1(L)\leq \RQ_L(\mathbf{x})=\frac{\sum_{e\in E}\bigl|\sum_{v_j\in e}\omega(v_j,e)^{-1}x_j\bigr|^2}{\sum_{i=1}^n \deg v_i\cdot \overline{x_i}\cdot x_i}= \frac{\sum_{e\in E}|e\cap S|^2}{\vol S}\leq \lambda_n(L). \qedhere
\end{equation*}

\end{proof}

\begin{definition}
A subset $S\subseteq V$ is \emph{independent} if $\#(S\cap e)\le 1$ for all $e\in E$. The \emph{independence number} of $\Gamma$ is
\begin{equation*}
    \alpha:=\max\{|S|:S\subseteq V \text{ independent}\}.
\end{equation*}
\end{definition}
\begin{corollary}\label{cor:independent}
If $S\subseteq V$ is independent,
\begin{equation*}
    \frac{\vol S}{|S|}\leq \lambda_n(K).
\end{equation*}
\end{corollary}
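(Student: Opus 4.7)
My plan is to use Theorem \ref{thm:S} as a black box together with the independence hypothesis. The crucial observation is that if $S$ is independent, then $|e\cap S|\in\{0,1\}$ for every $e\in E$, so $|e\cap S|^{2}=|e\cap S|$, and hence
\begin{equation*}
\sum_{e\in E}|e\cap S|^{2}=\sum_{e\in E}|e\cap S|=\sum_{v\in S}\deg v=\vol S
\end{equation*}
by the standard double-counting of vertex--edge incidences restricted to $S$. This is exactly the numerator appearing in the bound of Theorem \ref{thm:S}, so the inequality $\vol S/|S|\leq\lambda_{n}(K)$ drops out as soon as one can apply that theorem.

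The mild subtlety is that Theorem \ref{thm:S} assumes $\omega(v,e)$ to be constant over all edges $e$ containing a given $v\in S$, and this is not automatic for an independent set, since a vertex of $S$ may sit in many edges carrying different incidence phases. To address this I would first pass to an edge-switched hypergraph: for each $e\in E$ with $e\cap S\neq\emptyset$ let $v_{e}$ denote the unique vertex in $e\cap S$ (well-defined because $S$ is independent) and set $\xi(e):=\omega(v_{e},e)$; set $\xi(e):=1$ on the remaining edges. In $G^{\xi}$ every incidence $\omega^{\xi}(v,e)$ with $v\in S$ equals $1$, so the hypothesis of Theorem \ref{thm:S} is satisfied trivially (with constant value $1$). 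Lemma \ref{lemma:switching} together with the corollary following it gives $\lambda_{n}(K(G))=\lambda_{n}(K(G^{\xi}))$, and then Theorem \ref{thm:S} applied to $G^{\xi}$ with $S$ furnishes the required inequality.

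There is no real obstacle here: the statement is essentially a direct corollary of Theorem \ref{thm:S}, the only new content being the identification of the numerator with $\vol S$ via independence. An equivalent route that avoids edge-switching altogether is to rerun the proof of Theorem \ref{thm:S} with the simpler indicator test vector $\mathbf{x}:=\mathbf{1}_{S}$ and apply Corollary \ref{cor:rq} directly: for each edge $e$ the inner sum $\sum_{v_{j}\in e\cap S}\omega(v_{j},e)^{-1}$ has at most one term, each of modulus one, so its squared modulus is again $|e\cap S|$, giving $\RQ_{K}(\mathbf{x})=\vol S/|S|\leq\lambda_{n}(K)$.
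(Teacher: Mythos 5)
Your proposal is correct and follows essentially the same route as the paper: edge-switch via Lemma \ref{lemma:switching} so that the incidence phases at vertices of $S$ become constant, apply Theorem \ref{thm:S}, and use independence to identify $\sum_{e\in E}|e\cap S|^2$ with $\vol S$. Your explicit construction of the switching function and the alternative direct Rayleigh-quotient argument with $\mathbf{1}_S$ are just more detailed renderings of the same idea.
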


\begin{proof}
Since $S$ is an independent set, by Lemma \ref{lemma:switching} we can assume, up to edge-switching, that given $v\in S$,
\begin{equation*}
    \omega(v):=\omega(v,e),
\end{equation*}
is constant for all $e\in E$ with $v\in e$. Therefore, by Theorem \ref{thm:S},
\begin{equation*}
    \lambda_1(K)\leq \frac{\sum_{e\in E}|e\cap S|^2}{|S|}=\frac{\vol S}{|S|} \leq \lambda_n(K).\qedhere
\end{equation*}
\end{proof}

\begin{remark}The proof used in Corollary \ref{cor:independent} in the case of $L$ would simply imply that
\begin{equation*}
    \lambda_1(L)\leq 1\leq \lambda_n(L), 
\end{equation*}but this statement is trivial since we know that $\sum_{i=1}^n\lambda_i(L)=n$, therefore the average of the eigenvalues is $1$. However, the eigenvalues of $L$ and $A$ also relate to the independence sets. In fact, with the same proof as the one in \cite[Theorem 3.4]{Independence}, one can see that
\begin{equation*}
       \alpha\leq \min\{|\{i:\lambda_i(L)\leq 1\}|,|\{i:\lambda_i(L)\geq 1\}|\},
  \end{equation*}and similarly
  \begin{equation*}
       \alpha\leq \min\{|\{i:\lambda_i(A)\leq 0\}|,|\{i:\lambda_i(A)\geq 0\}|\}.
  \end{equation*}
  \end{remark}

\begin{corollary}
\begin{equation*}
    \max\{\Delta,\nabla\}\leq \lambda_n(K).
\end{equation*}
\end{corollary}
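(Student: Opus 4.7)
The inequality splits naturally into two claims: $\Delta \leq \lambda_n(K)$ and $\nabla \leq \lambda_n(K)$. The plan is to handle each with a well-chosen Rayleigh-quotient test vector, using Corollary \ref{cor:rq} and the duality of $K$ and $K^*$ via Corollary \ref{cor:E}.

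For the bound $\Delta \leq \lambda_n(K)$, the quickest route is to invoke Corollary \ref{cor:independent} with $S = \{v\}$, where $v \in V$ is a vertex realizing $\deg v = \Delta$. A singleton trivially satisfies $|S \cap e| \leq 1$ for every $e$, so $S$ is independent, and $\vol S / |S| = \Delta$. Alternatively, one can argue directly: the $(i,i)$-entry of $K$ equals $\deg v_i$, so taking $\mathbf{x}$ to be the standard basis vector $\mathbf{e}_i$ with $\deg v_i = \Delta$ gives $\RQ_K(\mathbf{e}_i) = \Delta$, and Lemma \ref{minmax} yields $\lambda_n(K) \geq \Delta$.

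For the bound $\nabla \leq \lambda_n(K)$, the key observation is that the $m \times m$ dual Kirchhoff Laplacian $K^* = B^+B$ has diagonal entries
\begin{equation*}
    (K^*)_{kk} = \sum_{i=1}^n |B_{ik}|^2 = \sum_{i: v_i \in e_k} |\omega(v_i, e_k)|^2 = |e_k|.
\end{equation*}
Picking $e_k \in E$ with $|e_k| = \nabla$ and applying Lemma \ref{minmax} to the Hermitian PSD matrix $K^*$ with test vector $\mathbf{e}_k \in \mathbb{C}^m$ gives $\lambda_m(K^*) \geq \nabla$. Since $\nabla \geq 1$ (assuming $E \neq \emptyset$; otherwise the statement is trivial), this is a positive, hence non-zero, eigenvalue of $K^*$. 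By Corollary \ref{cor:E}, it is also a non-zero eigenvalue of the PSD matrix $K$, and therefore $\lambda_n(K) \geq \lambda_m(K^*) \geq \nabla$.

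Combining the two bounds proves $\max\{\Delta, \nabla\} \leq \lambda_n(K)$. The only subtle point — really the one step worth double-checking — is the passage from $\lambda_m(K^*) \geq \nabla$ back to $\lambda_n(K) \geq \nabla$; this is immediate because both $K$ and $K^*$ are positive semidefinite with identical non-zero spectra, so their largest eigenvalues agree whenever at least one non-zero eigenvalue exists. All other steps are routine evaluations of $K$ and $K^*$ at standard basis vectors.
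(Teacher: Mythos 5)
Your proof is correct and follows essentially the same route as the paper: the bound $\Delta\leq\lambda_n(K)$ comes from the singleton independent set (equivalently a standard basis vector in the Rayleigh quotient), and the bound $\nabla\leq\lambda_n(K)$ comes from the fact that $K$ and $K^*=B^+B$ share their non-zero eigenvalues. The paper phrases the second half via the dual hypergraph $G^*$ (whose vertex degrees are the edge sizes of $G$ and whose Kirchhoff Laplacian is $K^*(G)$), which is exactly your direct computation of the diagonal entries $(K^*)_{kk}=|e_k|$ in disguise.
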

\begin{proof}
Let $v\in V$ with $\deg v=\Delta$. Then, $S=\{v\}$ is an independent set with $\vol S=\Delta$ and $|S|=1$. By Corollary \ref{cor:independent}, 
\begin{equation*}
    \Delta \leq \lambda_n(K).
\end{equation*}By taking the dual hypergraph, this also implies that
\begin{equation*}
    \nabla \leq \lambda_m(K(G^*))=\lambda_m(K^*(G))=\lambda_n(K(G)).
\end{equation*}Hence,
\begin{equation*}
    \max\{\Delta,\nabla\}\leq \lambda_n(K). \qedhere
\end{equation*} 
\end{proof}

\bibliographystyle{unsrt}
\bibliography{ComplexUnit}
				\end{document}